\title{Notes on the Gabriel-Roiter measure
} 
\author[Henning Krause]{Henning Krause}
\address{Henning Krause\\ Fakult\"at f\"ur Mathematik\\
Universit\"at Bielefeld\\ 33501 Bielefeld\\ Germany.}
\email{hkrause@math.uni-bielefeld.de}
\newtheorem*{lem}{Lemma}
\newtheorem*{prop}{Proposition}
\newtheorem*{cor}{Corollary}
\newtheorem*{thm}{Theorem}
\theoremstyle{remark}
\theoremstyle{definition}
\newtheorem*{exm}{Example}
\newtheorem*{defn}{Definition}
\numberwithin{equation}{subsection}
\newtheorem*{rem}{Remark}
\renewcommand{\mod}{\operatorname{mod}\nolimits}
\newcommand{\smatrix}[1]{\left[\begin{smallmatrix}#1\end{smallmatrix}\right]}
\renewcommand{\leq}{\leqslant}
\renewcommand{\geq}{\geqslant}
\newcommand{\Ch}{\operatorname{Ch}\nolimits}
\newcommand{\ind}{\operatorname{ind}\nolimits}
\newcommand{\id}{\operatorname{id}\nolimits}
\newcommand{\Hom}{\operatorname{Hom}\nolimits}
\newcommand{\Ker}{\operatorname{Ker}\nolimits}
\newcommand{\Coker}{\operatorname{Coker}\nolimits}
\newcommand{\soc}{\operatorname{soc}\nolimits}
\renewcommand{\dim}{\operatorname{dim}\nolimits}
\newcommand{\Ext}{\operatorname{Ext}\nolimits}
\newcommand{\op}{\mathrm{op}}
\newcommand{\inc}{\mathrm{inc}}
\newcommand{\init}{\mathrm{init}}
\newcommand{\cent}{\mathrm{cent}}
\newcommand{\term}{\mathrm{term}}
\newcommand{\lto}{\longrightarrow}
\def\a{\alpha}
\def\b{\beta}
\def\g{\gamma}
\def\p{\phi}
\def\m{\mu}
\def\la{\lambda}
\def\La{\Lambda}
\def\A{{\mathcal A}}
\def\C{{\mathcal C}}
\def\bbN{\mathbb N}
\def\bbQ{\mathbb Q}
\def\bbZ{\mathbb Z}
\def\bfD{\mathbf D}
\begin{document}
\maketitle 

In his proof of the first Brauer-Thrall conjecture \cite{Ro}, Roiter
used an induction scheme which Gabriel formalized in his report on
abelian length categories \cite{G}. The first Brauer-Thrall conjecture
asserts that every finite dimensional algebra of bounded
representation type is of finite representation type. Ringel
noticed\footnote{Cf.\ the footnote on p.\ 91 of \cite{G}.} that the
formalism of Gabriel and Roiter is also useful for studying the
representations of algebras having unbounded representation type. 

In these notes we present a purely combinatorial definition of the
Gabriel-Roiter measure and combine this with an axiomatic
characterization; see also \cite{K}.  Given a finite dimensional
algebra $\La$, the Gabriel-Roiter measure is characterized as a
universal morphism $\ind\La\to P$ of partially ordered sets. The map
is defined on the isomorphism classes of finite dimensional
indecomposable $\La$-modules and is a suitable refinement of the
length function $\ind\La\to\bbN$ which sends a module to its
composition length. The axiomatic treatment is complemented by a
recursive definition of the Gabriel-Roiter measure.

The second part of these notes discusses the Gabriel-Roiter measure
for a fixed abelian length category. This is the original setting for
Gabriel's work. In particular, Gabriel's main property of the measure
is proved. This is used to extend the Gabriel-Roiter measure from
indecomposable to arbitrary objects. Our main example is the category
of finite dimensional $\La$-modules over some finite dimensional
algebra $\La$. We report on Ringel's work \cite{R1,R2}, presenting for
instance his refinement of the first Brauer-Thrall conjecture.

These are the notes for a series of four lectures at the ``Advanced
School and Conference on Representation Theory and Related Topics'' in
Trieste (ICTP, January 2006). I am grateful to the organizers of this
school for exposing me to this beautiful subject. In addition, I wish
to express my thanks to the participants for their enthusiasm and to
Philipp Fahr for numerous helpful comments.

\section{Chains and length functions}

\subsection{The Gabriel-Roiter measure}\label{se:defgr} 
There are a number of possible approaches to define the Gabriel-Roiter
measure. Fix a partially ordered set $(S,\leq)$ which is equipped with
a length function $\la\colon S\to\bbN$. We start off by defining the
Gabriel-Roiter measure for $S$ as a morphism $\m\colon S\to P$ of
partially ordered sets which refines the length function $\la$. Let us
stress right away that the values $\m(x)$ for $x\in S$ are not
relevant. All we need to know is whether for a pair $x,y$ of elements
in $S$, the relation $\m(x)\leq\m(y)$ holds or not.  This is the
essence of a measure and we make this precise in the following
definition.

\begin{defn}
Let $(S,\leq)$ be a partially ordered set. A \emph{measure} $\m$ for
$S$ is a relation on $S$, written $\m(x)\leq \m(y)$, for a pair $x,y$
of elements in $S$, such that for all $x,y,z$ in $S$ the following
holds:
\begin{enumerate}
\item[(M1)] $\m(x)\leq \m(y)$ and $\m(y)\leq \m(z)$ imply $\m(x)\leq \m(z)$.
\item[(M2)] $\m(x)\leq \m(y)$ or $\m(y)\leq \m(x)$.
\item[(M3)] $x\leq y$ implies $\m(x)\leq \m(y)$.
\end{enumerate}
We write $\m(x)=\m(y)$ if both $\m(x)\leq \m(y)$ and $\m(y)\leq \m(x)$
hold.
\end{defn}
A measure $\m$ for $S$ gives rise to an equivalence relation on $S$ as
follows: Call two elements $x$ and $y$ \emph{equivalent} if
$\m(x)=\m(y)$.  The set $S/\m$ of equivalence classes is totally
ordered via $\m$ and the canonical map $S\to S/\m$ is a morphism of
partially ordered sets. Conversely, any morphism $\p\colon S\to P$ to
a totally ordered set $P$ gives rise to a measure $\m$ for $S$
provided one defines $\m(x)\leq \m(y)$ if $\p(x)\leq \p(y)$ holds.

In this section we present three different approaches defining the
Gabriel-Roiter measure for a partially ordered set $S$ and a length
function $\la\colon S\to\bbN$. To be more precise, we define the
Gabriel-Roiter measure as a morphism $S\to\Ch(\bbN)$ of partially
ordered sets, where $\Ch(\bbN)$ denotes the lexicographically ordered
set of finite sets of natural numbers. We complement this by a
recursive and an axiomatic definition.  Note that all three concepts
are equivalent in the sense that they yield the same measure for $S$.

\subsection{The lexicographic order on finite chains}\label{se:lex}
Let $(S,\leq)$ be a partially ordered set.  A subset $X\subseteq S$ is a {\em chain}
if $x_1\leq x_2$ or $x_2\leq x_1$ for each pair $x_1,x_2\in X$.  For a
finite chain $X$, we denote by $\min X$ its minimal and by $\max X$
its maximal element, using the convention
$$\max \emptyset < x< \min \emptyset\quad\text{for all}\quad x\in S.$$ We
write $\Ch(S)$ for the set of all finite chains in $S$  and let
$$\Ch(S,x):=\{X\in\Ch(S)\mid\max X=x\}\quad\text{for}\quad x\in S.$$
On $\Ch(S)$ we consider the {\em lexicographic order} which is defined
by
$$X\leq Y \quad :\Longleftrightarrow\quad \min(Y\setminus X)\leq
\min(X\setminus Y)\quad\text{for}\quad  X,Y\in\Ch(S).$$ 

\begin{rem}
(1) $X\subseteq Y$ implies $X\leq Y$ for $X,Y\in\Ch(S)$.

(2) Suppose that $S$ is totally ordered. Then $\Ch(S)$ is totally
ordered.  We may think of $X\in\Ch(S)\subseteq\{0,1\}^S$ as a string
of $0$s and $1$s which is indexed by the elements in $S$. The usual
lexicographic order on such strings coincides with the lexicographic
order on $\Ch(S)$.
\end{rem}

\begin{exm}
Let $\bbN=\{1,2,3,\cdots\}$ and $\bbQ$ be the set of rational numbers
together with the natural ordering. Then the map
$$\Ch(\bbN)\lto \bbQ ,\quad X\mapsto \sum_{x\in X}2^{-x}$$ is
injective and order preserving, taking values in the interval
$[0,1]$. For instance, the subsets of $\{1,2,3\}$ are ordered as follows:
$$\{\}<\{3\}<\{2\}<\{2,3\}<\{1\}<\{1,3\}<\{1,2\}<\{1,2,3\}.$$
\end{exm}

We need the following properties of the lexicographic order.

\begin{lem}
Let $X,Y\in\Ch(S)$ and $X^*:=X\setminus\{\max X\}$.
\begin{enumerate}
\item $X^*=\max\{X'\in\Ch(S)\mid X'<X\text{ and }\max X'<\max X\}$.
\item If $X^*< Y$ and $\max X\geq\max Y$, then $X\leq Y$.
\end{enumerate}
\end{lem}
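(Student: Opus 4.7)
\emph{Proof plan.} Both parts are proved by unwinding the definition $X \leq Y \iff \min(Y \setminus X) \leq \min(X \setminus Y)$ (with the extended conventions on $\min\emptyset$) and tracking how the symmetric difference $X \triangle Y$ is altered by removing $\max X$ from $X$. I assume $X \neq \emptyset$ throughout, so that $X^*$ is defined.

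For (1), I first verify that $X^*$ lies in the set on the right: $X^* \subsetneq X$ gives $X^* < X$ by the remark, while $\max X^* < \max X$ is immediate (using the convention $\max\emptyset < x$ in case $X^* = \emptyset$). For maximality, take any $X' \in \Ch(S)$ satisfying $X' < X$ and $\max X' < \max X$. The latter forces $\max X \notin X'$, hence $X' \setminus X^* = X' \setminus X$. The case $X' \subseteq X^*$ is the remark; otherwise, setting $m = \min(X' \setminus X)$, the hypothesis $X' < X$ yields $n := \min(X \setminus X') \leq m$, and the chain $n \leq m \leq \max X' < \max X$ forces $n \in X^* \setminus X'$. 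Thus $\min(X^* \setminus X') \leq m = \min(X' \setminus X^*)$, i.e.\ $X' \leq X^*$.

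For (2), a direct calculation gives
\[
X^* \triangle Y = \begin{cases} (X \triangle Y) \cup \{\max X\} & \text{if } \max X \in Y,\\ (X \triangle Y) \setminus \{\max X\} & \text{if } \max X \notin Y,\end{cases}
\]
where the union in the first line is disjoint because then $\max X \in X \cap Y$. Combining this with the observation that every element of $X \cup Y$ distinct from $\max X$ is strictly below $\max X$ (using $\max Y \leq \max X$), a short case distinction yields $\min(X \triangle Y) = \min(X^* \triangle Y)$ whenever both symmetric differences are non-empty. The hypothesis $X^* < Y$ then places this common minimum in $Y \setminus X^*$; being $< \max X$, it must in fact lie in $Y \setminus X$ (the only element of $Y \setminus X^*$ possibly missing from $Y \setminus X$ being $\max X$ itself, in the case $\max X \in Y$). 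This yields $\min(Y \setminus X) \leq \min(X \setminus Y)$, i.e.\ $X \leq Y$.

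The main obstacle I anticipate is the bookkeeping around degenerate configurations in (2): $X \triangle Y = \emptyset$ gives $X = Y$ and hence $X \leq Y$ directly, while $X^* \triangle Y = \emptyset$ contradicts $X^* < Y$. These are the points where the conventions $\max\emptyset < x < \min\emptyset$ must be invoked to make the boundary cases fall uniformly into the generic argument above.
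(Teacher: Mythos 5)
Your proof is correct and follows essentially the same route as the paper's: both parts are direct unwindings of the definition of the lexicographic order, the key point each time being that removing $\max X$ cannot disturb the relevant minima because every other element of $X\cup Y$ lies strictly below $\max X$, so your packaging via the symmetric difference $X\triangle Y$ and your case split on $\max X\in Y$ (versus the paper's split on $X^*\subseteq Y$) are cosmetic variations. The only point to watch is that $\min(X\triangle Y)$ presupposes that the elements of $X$ and $Y$ are mutually comparable -- automatic in the setting where the lemma is actually applied, namely $S=\bbN$, but not literally guaranteed for an arbitrary poset $S$ -- whereas the paper's version only ever takes minima of subsets of a single chain.
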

\begin{proof}
(1) Let $X'<X$ and $\max X'<\max X$. We show that $X'\leq X^*$. This
    is clear if $X'\subseteq X^*$. Otherwise, we have
$$\min (X^*\setminus X')=\min (X\setminus X')<\min (X'\setminus X)=\min
(X'\setminus X^*),$$ and therefore $X'\leq X^*$.

(2) The assumption $X^*< Y$ implies by definition
$$\min (Y\setminus X^*)<\min (X^*\setminus Y).$$ We consider two
cases. Suppose first that $X^*\subseteq Y$. If $X\subseteq Y$, then
$X\leq Y$. Otherwise, $$\min (Y\setminus X)<\max X=\min (X\setminus Y)$$
and therefore $X<Y$. Now suppose that $X^*\not\subseteq Y$. We use
again that $\max X\geq\max Y$, exclude the case $Y\subseteq X$, and
obtain
$$\min (Y\setminus X)=\min (Y\setminus X^*)<\min (X^*\setminus Y)=\min
(X\setminus Y).$$ Thus $X\leq Y$ and the proof is complete.
\end{proof}

\subsection{Length functions}
Let $(S,\leq)$ be a partially ordered set. A {\em length function} on
$S$ is by definition a map $$\la\colon S\lto\bbN=\{1,2,3,\ldots\}$$
such that $x< y$ in $S$ implies $\la(x)< \la(y)$.  A length function
$\la\colon S\to\bbN$ induces for each $x\in S$ a map
$$\Ch(S,x)\lto\Ch(\bbN,\la(x)),\quad X\mapsto \la(X),$$ and therefore
the following {\em chain length function}
$$S\lto \Ch(\bbN),\quad x\mapsto \la^*(x):=\max\{\la(X)\mid
X\in\Ch(S,x) \}.$$ This chain length function is by definition the
\emph{Gabriel-Roiter measure} for $S$ with respect to $\la$.

We continue with a list of basic properties (C0) -- (C5) of $\la^*$.

\subsection{A recursive definition}
The following property (C0) of the chain length function $\la^*\colon
S\to\Ch(\bbN)$ can be used to define $\la^*$ by induction on the
length of the elements in $S$.  We take this as our second definition
of the Gabriel-Roiter measure for $S$ with respect to $\la$.
Note that $\la^*(x)=\{\la(x)\}$ if $x$ is a minimal element of $S$.
\begin{prop}
Let $x\in S$. 
\begin{enumerate}
\item[(C0)] $\la^*(x)=
\max_{x'<x}\la^*(x')\cup\{\la(x)\}$.
\end{enumerate}
\end{prop}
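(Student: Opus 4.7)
The plan is to split a chain in $\Ch(S,x)$ into its top element $x$ and the rest, and then compare such chains using the lexicographic order on $\Ch(\bbN)$ via a very clean reduction. Concretely, any $X\in\Ch(S,x)$ satisfies either $X=\{x\}$ or $X=X^*\cup\{x\}$ with $X^*\in\Ch(S,x')$ for the uniquely determined $x'=\max X^*<x$; conversely, every such $X^*$ yields an element of $\Ch(S,x)$ by adjoining $x$. Because $\la$ is strictly order-preserving, $\la(X)=\la(X^*)\cup\{\la(x)\}$, and $\la(X^*)\subseteq\{1,\dots,\la(x)-1\}$.

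The key step is the following comparison principle in $\Ch(\bbN)$: if $A,B\subseteq\{1,\dots,n-1\}$ and we set $A':=A\cup\{n\}$, $B':=B\cup\{n\}$, then $A'\setminus B'=A\setminus B$ and $B'\setminus A'=B\setminus A$, so the defining inequality $\min(B'\setminus A')\leq\min(A'\setminus B')$ is literally the same as $\min(B\setminus A)\leq\min(A\setminus B)$. Hence $A'\leq B'$ in $\Ch(\bbN)$ if and only if $A\leq B$. Apply this with $n=\la(x)$: maximizing $\la(X)=\la(X^*)\cup\{\la(x)\}$ over $X\in\Ch(S,x)$ is the same as maximizing $\la(X^*)$ over all admissible $X^*$, then adjoining $\la(x)$.

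For each $x'<x$, restricting to $X^*\in\Ch(S,x')$ gives inner maximum $\la^*(x')$ by the very definition of $\la^*$. Taking the supremum over all $x'<x$ then produces $\max_{x'<x}\la^*(x')$, and it only remains to dispose of the option $X^*=\emptyset$ corresponding to $X=\{x\}$: by Remark~(1) of Section~\ref{se:lex}, $\emptyset\subseteq\la^*(x')$ for any $x'<x$, hence $\emptyset\leq\la^*(x')$, so this option is dominated whenever $x$ is not minimal. If $x$ is minimal, the set $\{x'\in S\mid x'<x\}$ is empty; the right-hand side of (C0) becomes $\max\emptyset\cup\{\la(x)\}=\{\la(x)\}$, which matches the only available chain $\{x\}$ in $\Ch(S,x)$.

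No deep obstacle is expected: the proof is essentially the reduction of lex comparison modulo a common top element, plus bookkeeping for the empty case. The slightly delicate point is checking that the reduction works cleanly with the chain $\{x\}$ itself and the convention $\max\emptyset<y$ for all $y$ from Section~\ref{se:lex}; this is handled by the Remark~(1) argument above.
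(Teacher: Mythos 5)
Your proof is correct, but it takes a genuinely different route from the paper's. The paper deduces (C0) from part~(1) of the Lemma in Section~\ref{se:lex}: writing $X=\la^*(x)$, it identifies $X\setminus\{\max X\}$ as the maximum of all chains $X'<X$ with $\max X'<\max X$, and leaves implicit the identification of that maximum with $\max_{x'<x}\la^*(x')$ (which requires, for instance, that $\la^*(x')<\la^*(x)$ for all $x'<x$ and that $\la^*(x)$ is realized by an actual chain in $\Ch(S,x)$). You instead isolate a small order-theoretic fact not stated in the paper --- that $A\mapsto A\cup\{n\}$ is an order embedding of the subsets of $\{1,\dots,n-1\}$ into $\Ch(\bbN)$, because adjoining a common element to both sets leaves the set differences $A\setminus B$ and $B\setminus A$ unchanged --- and combine it with the explicit bijection between $\Ch(S,x)\setminus\{\{x\}\}$ and $\bigcup_{x'<x}\Ch(S,x')$ given by stripping off the top element. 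This makes the reduction of the outer maximum to the inner maxima $\la^*(x')$ completely transparent, and your bookkeeping for the degenerate chain $\{x\}$ (via Remark~(1) of Section~\ref{se:lex}) and for minimal $x$ is exactly what is needed. What the paper's route buys is brevity and reuse: the same Lemma is needed again for (C3). What your route buys is self-containedness and an explicit justification of the step the paper glosses over. Both arguments are sound.
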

\begin{proof}
Let $X=\la^*(x)$ and note that $\max X=\la(x)$. The assertion follows from
Lemma~\ref{se:lex} because we have
\begin{equation*}
X\setminus\{\max X\}=\max\{X'\in\Ch(\bbN)\mid X'<X\text{ and }\max
X'<\max X\}.\qedhere
\end{equation*}
\end{proof}

\subsection{Basic properties}\label{se:def}
Let $\la\colon S\to\bbN$ be a length function and $\la^*\colon S\to
\Ch(\bbN)$ the induced chain length function.  The following basic
properties suggest to think of $\la^*$ as a refinement of $\la$.

\begin{prop}
Let $x,y\in S$. 
\begin{enumerate}
\item[(C1)] $x\leq y$ implies $\la^*(x)\leq\la^*(y)$.
\item[(C2)] $\la^*(x)=\la^*(y)$ implies $\la(x)=\la(y)$.
\item[(C3)] $\la^*(x')<\la^*(y)$ for all $x'<x$ and $\la(x)\geq\la(y)$
imply $\la^*(x)\leq\la^*(y)$.
\end{enumerate}
\end{prop}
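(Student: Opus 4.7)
The plan is to derive each of (C1), (C2), (C3) directly from the definition of $\la^*$, using Remark~\ref{se:lex}(1), Lemma~\ref{se:lex}(2), and the recursion (C0). One observation will be used throughout: $\max\la^*(x)=\la(x)$ for every $x\in S$. Indeed, $\la^*(x)=\la(X_0)$ for some $X_0\in\Ch(S,x)$, and since $\la$ is strictly order-preserving on chains, $\max\la(X_0)=\la(\max X_0)=\la(x)$. A consequence is that $\la(z)\leq\la(x)$ for every $z\in X_0$, so $\la^*(x)\subseteq\{1,\ldots,\la(x)\}$.

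For (C1), suppose $x\leq y$ and pick $X_0\in\Ch(S,x)$ realizing $\la^*(x)=\la(X_0)$. If $x=y$ the conclusion is immediate; otherwise $\la(x)<\la(y)$, so $\la(y)\notin\la^*(x)$, and $X_0\cup\{y\}\in\Ch(S,y)$. By the maximality of $\la^*(y)$,
\[
\la^*(x)\cup\{\la(y)\}\;=\;\la(X_0\cup\{y\})\;\leq\;\la^*(y).
\]
Combined with Remark~\ref{se:lex}(1), applied to the inclusion $\la^*(x)\subseteq\la^*(x)\cup\{\la(y)\}$, this yields $\la^*(x)\leq\la^*(y)$. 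Property (C2) is then a one-line consequence of the guiding observation: $\la^*(x)=\la^*(y)$ forces $\max\la^*(x)=\max\la^*(y)$, i.e.\ $\la(x)=\la(y)$.

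For (C3), set $X:=\la^*(x)$ and $Y:=\la^*(y)$. By (C0), $X^*:=X\setminus\{\la(x)\}$ equals $\la^*(x')$ for some $x'<x$ attaining the maximum in (C0), or $X^*=\emptyset$ if $x$ is minimal in $S$. In the first case the hypothesis $\la^*(x')<\la^*(y)$ gives $X^*<Y$; in the second case $X^*=\emptyset<Y$, since $Y$ contains $\la(y)$ and is thus nonempty, so the convention $\max\emptyset<\min Y$ applies. In either case $X^*<Y$, and moreover $\max X=\la(x)\geq\la(y)=\max Y$ by hypothesis. Lemma~\ref{se:lex}(2) then yields $X\leq Y$, as required.

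No step poses a real obstacle; the only place where one must be slightly careful is the edge case in (C3) when $x$ is minimal, and this is absorbed uniformly via the boundary convention $\max\emptyset<x<\min\emptyset$.
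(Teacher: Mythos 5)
Your proof is correct and follows essentially the same route as the paper's: (C1) by adjoining $y$ to a chain realizing $\la^*(x)$ and using that $X\subseteq Y$ implies $X\leq Y$, (C2) from the identity $\max\la^*(x)=\la(x)$, and (C3) by combining the recursion (C0) with part (2) of the Lemma on the lexicographic order. The only difference is that you make explicit the edge case $X^*=\emptyset$ when $x$ is minimal, which the paper leaves implicit.
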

\begin{proof}
  Suppose $x\leq y$ and let $X\in\Ch(S,x)$. Then
  $Y=X\cup\{y\}\in\Ch(S,y)$ and we have $\la(X)\leq\la(Y)$ since
  $\la(X)\subseteq\la(Y)$. Thus $\la^*(x)\leq \la^*(y)$.  If
  $\la^*(x)= \la^*(y)$, then
    $$\la(x)=\max\la^*(x)=\max\la^*(y)=\la(y).$$ To prove (C3), we use
    (C0) and apply Lemma~\ref{se:lex} with $X=\la^*(x)$ and
    $Y=\la^*(y)$. In fact, $\la^*(x')<\la^*(y)$ for all $x'<x$ implies
    $X^*<Y$, and $\la(x)\geq\la(y)$ implies $\max X\geq \max Y$. Thus
    $X\leq Y$.
\end{proof}

We state some further elementary properties of the map $\la^*$.
\begin{prop}
Let $x,y\in S$. 
\begin{enumerate}
\item[(C4)] $\la^*(x)\leq \la^*(y)$ or $\la^*(y)\leq\la^*(x)$.
\item[(C5)] $\{\la^*(x)\mid x\in S\text{ and }\la(x)\leq n\}$ is
  finite for all $n\in\bbN$.
\end{enumerate}
\end{prop}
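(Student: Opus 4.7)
The plan is that both parts follow almost immediately from the structure of the target $\Ch(\bbN)$, so this proof is really just bookkeeping built on top of the definition of $\la^*$ and the material in Section~\ref{se:lex}.

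For (C4), I would invoke part~(2) of the Remark in Section~\ref{se:lex}: whenever $S$ is totally ordered, $\Ch(S)$ is totally ordered under the lexicographic order. Since $\bbN$ carries its natural total order, $\Ch(\bbN)$ is totally ordered, and hence any two values $\la^*(x), \la^*(y) \in \Ch(\bbN)$ are comparable. This is exactly (C4).

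For (C5), the key observation I would establish first is that
\[
\la^*(x) \subseteq \{1, 2, \ldots, \la(x)\} \qquad \text{for every } x \in S.
\]
Indeed, by definition $\la^*(x) = \la(X)$ for some chain $X \in \Ch(S, x)$; since $\la$ is a length function it is strictly increasing on chains, and $\max X = x$ forces $\la(X) \subseteq \{1, \ldots, \la(x)\}$. Consequently, for every $n \in \bbN$, the set in question is contained in the power set of $\{1, \ldots, n\}$ and so has cardinality at most $2^n$.

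I do not anticipate a serious obstacle here: the total-ordering statement (C4) is entirely structural, depending only on the target, and (C5) reduces to a crude cardinality bound once one notices the containment $\la^*(x) \subseteq \{1, \ldots, \la(x)\}$, which in turn is immediate from $\la$ being strictly monotone on chains. The only mildly delicate point is to remember that this containment uses the fact that $\la$ takes positive integer values on the strictly increasing chain $X$ with top element $x$.
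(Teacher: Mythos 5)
Your proposal is correct and follows essentially the same route as the paper: (C4) from the total ordering of $\Ch(\bbN)$, and (C5) from the fact that every value $\la^*(x)$ with $\la(x)\leq n$ lies in the finite set of chains in $\bbN$ with maximum at most $n$ (equivalently, subsets of $\{1,\dots,n\}$). Your explicit note that $\la^*(x)\subseteq\{1,\dots,\la(x)\}$ because $\la$ is strictly increasing on chains is just a slightly more detailed spelling-out of the same observation.
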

\begin{proof}
  (C4) is clear since $\Ch(\bbN)$ is totally ordered. (C5) follows
  from the fact that $\{X\in\Ch(\bbN)\mid \max X\leq n\}$ is finite
  for all $n\in\bbN$.
\end{proof}

The map $\la^*$ induces a measure $\m$ for $S$ in the sense of
Definition~\ref{se:defgr}.

\begin{cor}
The chain length function $\la^*$ induces  via
$$\m(x)\leq \m(y)\quad :\Longleftrightarrow\quad
\la^*(x)\leq\la^*(y)\quad\text{for}\quad x,y\in S$$ a measure for $S$.
Moreover, we have for all $x,y$ in $S$
$$\m(x)= \m(y)\quad \Longleftrightarrow\quad
\max_{x'<x}\m(x')=\max_{y'<y}\m(y')\text{ and }\la(x)=\la(y).$$
\end{cor}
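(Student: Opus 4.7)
The plan splits into two parts. First I would verify that the relation $\m$ defined by $\m(x)\leq\m(y):\Leftrightarrow\la^*(x)\leq\la^*(y)$ is a measure. Transitivity (M1) is immediate from transitivity of the lexicographic order on $\Ch(\bbN)$; totality (M2) is exactly the content of (C4); and compatibility (M3) with the order on $S$ is exactly (C1). Note that under this definition, $\m(x)=\m(y)$ is equivalent to $\la^*(x)=\la^*(y)$, so the equivalence classes under $\m$ are precisely the fibres of $\la^*$, and the comparison of $\m$-maxima on the right-hand side of the claimed characterization can be read as a genuine equality of chains in $\bbN$.

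For the equality characterization, the main tool is the recursive formula (C0). Writing $M_x:=\max_{x'<x}\la^*(x')$ and $M_y:=\max_{y'<y}\la^*(y')$, with the max over the empty family interpreted as $\emptyset\in\Ch(\bbN)$, (C0) reads $\la^*(x)=M_x\cup\{\la(x)\}$ and $\la^*(y)=M_y\cup\{\la(y)\}$. The key observation I would isolate is that $\la(x)\notin M_x$: if $x$ has a predecessor and $M_x=\la^*(x_0)$ for some $x_0<x$, then every element of the chain $M_x$ has the form $\la(z)$ with $z\leq x_0$, hence is bounded by $\la(x_0)<\la(x)$; the minimal case is trivial. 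The analogous statement holds for $y$.

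With this in hand, both implications are short. For the forward direction, $\la^*(x)=\la^*(y)$ forces $\la(x)=\la(y)$ by (C2), and stripping this common maximum from $M_x\cup\{\la(x)\}=M_y\cup\{\la(y)\}$ leaves $M_x=M_y$, which translates into the asserted equality of $\m$-maxima. The reverse direction follows immediately by substituting $M_x=M_y$ and $\la(x)=\la(y)$ into the two instances of (C0). The only (very mild) obstacle is pinning down that $\la(x)\notin M_x$, which is precisely what legitimises the ``strip off the maximum'' step and keeps the two halves of $\la^*(x)$ from interfering with one another.
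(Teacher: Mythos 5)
Your argument is correct and follows the same route as the paper, which simply invokes (C1) and (C4) for the measure axioms and (C0) for the equality characterization. The only thing you add is the explicit verification that $\la(x)\notin\max_{x'<x}\la^*(x')$ (via $\max\la^*(x_0)=\la(x_0)<\la(x)$), which the paper leaves implicit but which is exactly the detail needed to make the ``strip off the maximum'' step rigorous.
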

\begin{proof} (C1) and (C4) imply that the map $\la^*$
induces a measure $\m$ for $S$. The characterization for $\m(x)=
\m(y)$ follows from (C0).
\end{proof}

\subsection{An axiomatic definition}\label{se:axiom}
Let $\la\colon S\to\bbN$ be a length function.  We present an
axiomatic characterization of the induced chain length function
$\la^*$.  Thus we can replace the original definition in terms of
chains by three simple conditions which express the fact that $\la^*$
refines $\la$. We take this as our third definition of the
Gabriel-Roiter measure for $S$ with respect to $\la$.

\begin{thm}
Let $\la\colon S\to\bbN$ be a length function. Then there exists a map
$\m\colon S\to P$ into a partially ordered set $P$ satisfying for all
$x,y\in S$ the following:
\begin{enumerate}
\item[(P1)] $x\leq y$ implies $\m(x)\leq\m(y)$.
\item[(P2)] $\m(x)=\m(y)$ implies $\la(x)=\la(y)$.
\item[(P3)] $\m(x')<\m(y)$ for all $x'<x$ and $\la(x)\geq\la(y)$ imply
$\m(x)\leq\m(y)$.
\end{enumerate}
Moreover, for any map $\m'\colon S\to P'$ into a partially ordered set
$P'$ satisfying the above conditions, we have for all $x,y$ in $S$ 
$$\m'(x)\leq\m'(y)\quad\Longleftrightarrow\quad
\m(x)\leq\m(y)\quad\Longleftrightarrow\quad
\la^*(x)\leq\la^*(y).$$ 
\end{thm}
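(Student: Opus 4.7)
Existence is immediate: take $\m=\la^*$; then the axioms (P1), (P2), (P3) are precisely (C1), (C2), (C3) from the preceding proposition. The substance is the universal property. For it, it suffices to prove the following claim for any $\m'\colon S\to P'$ satisfying (P1)-(P3) and any $x,y\in S$: the values $\m'(x)$ and $\m'(y)$ are comparable in $P'$, and $\m'(x)\leq\m'(y)\Longleftrightarrow\la^*(x)\leq\la^*(y)$. Applied both to $\m'$ and to $\la^*$ itself, this yields the three-term chain of equivalences in the theorem. I will argue by induction on $n=\la(x)+\la(y)$, carrying comparability as part of the inductive statement.

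\textbf{Ingredients and base case.} A preliminary strictness used throughout is that whenever $x'<x$ in $S$, (P1) gives $\m'(x')\leq\m'(x)$ while (P2) combined with $\la(x')<\la(x)$ rules out equality, hence $\m'(x')<\m'(x)$; the analogous strictness $\la^*(x')<\la^*(x)$ follows from (C1) and (C2). The base case $\la(x)=\la(y)=1$ forces both $x,y$ to be minimal in $S$, so (P3) applies vacuously in both directions and yields $\m'(x)=\m'(y)$, in agreement with $\la^*(x)=\{1\}=\la^*(y)$.

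\textbf{Inductive step.} For the inductive step I assume without loss of generality $\la(x)\geq\la(y)$. By the induction hypothesis applied to each pair $(x',y)$ with $x'<x$ (valid since $\la(x')+\la(y)<\la(x)+\la(y)$), the values $\m'(x')$ and $\m'(y)$ are comparable, and $\m'(x')\leq\m'(y)\Longleftrightarrow\la^*(x')\leq\la^*(y)$. I then split into two exhaustive cases. Case (A): $\m'(x')<\m'(y)$ for every $x'<x$. Then (P3) gives $\m'(x)\leq\m'(y)$, the induction hypothesis transfers this to $\la^*(x')<\la^*(y)$ for all such $x'$, and (C3) gives $\la^*(x)\leq\la^*(y)$; both weak inequalities hold. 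Case (B): some $x_0<x$ satisfies $\m'(x_0)\geq\m'(y)$. The preliminary strictness then forces $\m'(y)\leq\m'(x_0)<\m'(x)$, so $\m'(x)\leq\m'(y)$ fails, and the induction hypothesis transfers $\m'(x_0)\geq\m'(y)$ to $\la^*(x_0)\geq\la^*(y)$, giving $\la^*(y)\leq\la^*(x_0)<\la^*(x)$ so that $\la^*(x)\leq\la^*(y)$ also fails. In both cases comparability and the iff are secured; the symmetric regime $\la(x)<\la(y)$ follows by swapping the roles of $x$ and $y$.

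\textbf{Main obstacle.} The only subtlety I foresee is the necessity of tracking comparability of $\m'$-values through the induction: since $P'$ is merely partially ordered, (P3) cannot by itself distinguish incomparability from a genuine failure of $\leq$, so predecessors of $x$ must already be known to be comparable with $y$ before the dichotomy in the inductive step can be invoked. Once comparability is carried along, (P1)-(P3) and (C1)-(C3) play perfectly parallel roles and the equivalence falls out with no further work.
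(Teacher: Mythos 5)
Your argument is correct, and its core---the uniqueness part---takes a genuinely different route from the paper's. The paper likewise takes $\m=\la^*$ for existence, but for uniqueness it only proves that (P1)--(P3) force $\m(x)\leq\m(y)$ or $\m(y)\leq\m(x)$: it chooses for each element a Gabriel-Roiter filtration $x_1<\dots<x_{\g(x)}=x$ (with $x_1$ minimal and $\m(x_{i-1})=\max_{x'<x_i}\m(x')$) and compares the filtrations of $x$ and $y$ at the last index $r$ where their $\m$-values agree, the outcome being decided via (P2) and (P3) by the lengths $\la(x_{r+1})$ and $\la(y_{r+1})$; that this pins the relation down uniquely is left somewhat implicit. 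You dispense with filtrations altogether and prove the explicit equivalence $\m'(x)\leq\m'(y)\Leftrightarrow\la^*(x)\leq\la^*(y)$ by a dichotomy on the predecessors of the longer element, so comparability and uniqueness are delivered in a single inductive statement; this is arguably tighter, and your observation that comparability must be carried through the induction to make the dichotomy exhaustive is exactly the right point. Two details are worth writing out: the transfer of \emph{strict} inequalities between $\m'$ and $\la^*$ uses the inductive equivalence for both ordered pairs $(x',y)$ and $(y,x')$ together with antisymmetry in $P'$; and the closing ``swap'' for $\la(x)<\la(y)$ needs the remark that (P2) and (C2) exclude $\m'(x)=\m'(y)$ and $\la^*(x)=\la^*(y)$ whenever $\la(x)\neq\la(y)$, so that the one-directional equivalence established for the pair $(y,x)$ can be negated to yield the one for $(x,y)$, while for $\la(x)=\la(y)$ both ordered pairs satisfy your normalization and no swap is needed. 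Both verifications are routine, so these are presentational rather than substantive gaps.
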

\begin{proof}
We have seen in (\ref{se:def}) that $\la^*$ satisfies (P1) -- (P3).
So it remains to show that for any map $\m\colon S\to P$ into a
partially ordered set $P$, the conditions (P1) -- (P3) uniquely
determine the relation $\m(x)\leq\m(y)$ for any pair $x,y\in S$.  In
fact, we claim that (P1) -- (P3) imply $\m(x)\leq\m(y)$ or
$\m(y)\leq\m(x)$. We proceed by induction on the length of the
elements in $S$. For elements of length $n=1$, the assertion is
clear. In fact, $\la(x)=1=\la(y)$ implies $\m(x)=\m(y)$ by (P3). Now
let $n>1$ and assume the assertion is true for all elements $x\in S$
of length $\la(x)<n$. We choose for each $x\in S$ of length
$\la(x)\leq n$ a {\em Gabriel-Roiter filtration}, that is, a sequence
$$x_1<x_2<\ldots <x_{\g(x)-1}<x_{\g(x)}=x$$ in $S$ such that $x_1$ is
minimal and $\max_{x'<x_{i}}\m(x')=\m(x_{i-1})$ for all $1< i\leq
\g(x)$. Such a filtration exists because the elements $\m(x')$ with
$x'<x$ are totally ordered. Now fix $x,y\in S$ of length at most $n$
and let $I=\{i\geq 1\mid \m(x_i)=\m(y_i)\}$. We consider $r=\max I$
and put $r=0$ if $I=\emptyset$. There are two possible cases.  Suppose
first that $r=\g(x)$ or $r=\g(y)$. If $r=\g(x)$, then
$\m(x)=\m(x_r)=\m(y_r)\leq\m(y)$ by (P1).  Now suppose $\g(x)\neq
r\neq\g(y)$. Then we have $\la(x_{r+1})\neq\la(y_{r+1})$ by (P2) and
(P3). If $\la(x_{r+1})>\la(y_{r+1})$, then we obtain
$\m(x_{r+1})<\m(y_{r+1})$, again using (P2) and (P3). Iterating this
argument, we get $\m(x)=\m(x_{\g(x)})<\m(y_{r+1})$. From (P1) we get
$\m(x)<\m(y_{r+1})\leq\m(y)$. Thus $\m(x)\leq\m(y)$ or
$\m(y)\leq\m(x)$ and the proof is complete.
\end{proof}

\section{Abelian length categories}

\subsection{Additive categories}
A category $\A$ is {\em additive} if every finite family
$X_1,X_2,\ldots,X_n$ of objects has a coproduct
$$X_1\oplus X_2\oplus\ldots \oplus X_n,$$ each set $\Hom_\A(A,B)$
is an abelian group, and the composition maps
$$\Hom_\A(B,C)\times\Hom_\A(A,B)\lto\Hom_\A(A,C)$$ are bilinear. 

\subsection{Abelian categories}
An additive category $\A$ is {\em abelian}, if every map $\p\colon
A\to B$ has a kernel and a cokernel, and if the canonical factorization
$$\xymatrix@=1.5em{\Ker\p\ar[r]^-{\p'}&A\ar[r]^-\p\ar[d]&
B\ar[r]^-{\p''}&\Coker\p\\ 
&\Coker\p'\ar[r]^-{\bar\p}&\Ker\p''\ar[u]}$$
of $\p$ induces  an isomorphism $\bar\p$.

\begin{exm} 
The category of modules over any associative ring is an abelian
category.
\end{exm}

\subsection{Subobjects}
Let $\A$ be an abelian category. We say that two monomorphisms
$X_1\to X$ and $X_2\to X$ are {\em equivalent},
if there exists an isomorphism $X_1\to X_2$ 
making the following diagram commutative.
\begin{equation*}
\xymatrix@=1.0em{X_1\ar[rd]\ar[rr]&&X_2\ar[ld]\\&X}
\end{equation*}
An equivalence class of monomorphisms into $X$ is called a {\em
subobject} of $X$. Given subobjects $X_1\to X$ and
$X_2\to X$, we write $X_1\subseteq X_2$ if there is a
morphism $X_1\to X_2$ making the above diagram commutative. An
object $X\neq 0$ is {\em simple} if $X'\subseteq X$ implies $X'=0$ or
$X'=X$.

\subsection{Length categories}
Let $\A$ be an abelian category.  An object $X$ has {\em finite
length} if it has a finite composition series $$0=X_0\subseteq
X_1\subseteq \ldots \subseteq X_{n-1}\subseteq X_n=X,$$ that is, each
$X_i/X_{i-1}$ is simple.  In this case the length of a composition
series is an invariant of $X$ by the Jordan-H\"older Theorem; it is
called the {\em length} of $X$ and is denoted by $\ell(X)$.  For
instance, $X$ is simple if and only if $\ell(X)=1$. Note that $X$ has
finite length if and only if $X$ is both artinian (i.e.\ satisfies the
descending chain condition on subobjects) and noetherian (i.e.\
satisfies the ascending chain condition on subobjects).

An abelian category is called a {\em length category} if all objects
have finite length and the isomorphism classes of objects form a set.

An object $X\neq 0$ is called {\em indecomposable} if $X=X_1\oplus
X_2$ implies $X_1=0$ or $X_2=0$. A finite length object admits
a finite direct sum decomposition into indecomposable objects having local
endomorphism rings. Moreover, such a decomposition is unique up to an
isomorphism by the Krull-Remak-Schmidt Theorem.

We denote by $\ind\A$ the set of isomorphism classes of indecomposable
objects of $\A$.

\begin{exm}
(1) Let $\La$ be a artinian ring. Then the category of finitely
generated $\La$-modules form a length category which we denote by
$\mod\La$.

(2) Let $k$ be a field and $Q$ be any quiver. Then the finite
    dimensional $k$-linear representations of $Q$ form a length
    category.
\end{exm}

\section{The Gabriel-Roiter measure}

Let $\A$ be an abelian length category. We give the definition of the
Gabriel-Roiter measure for $\A$ which is due to Gabriel \cite{G} and
was inspired by the work of Roiter \cite{Ro}. Then we discuss some specific
properties, including Ringel's results about Gabriel-Roiter inclusions
\cite{R1}.

\subsection{The definition}
Let $\A$ be an abelian length category.  The isomorphism classes of
objects of $\A$ are partially ordered via the subobject relation
$$X\subseteq Y\quad :\Longleftrightarrow\quad \text{there exists a
monomorphism } X\to Y.$$ We consider the length function
$\ell\colon\ind\A\to\bbN$ which takes an object $X$ to its composition
length $\ell(X)$. Then the induced chain length function
$\ell^*\colon\ind\A\to\Ch(\bbN)$ is by definition the {\em
Gabriel-Roiter measure} for $\A$. We will only work with this
definition when making explicit computations. Otherwise, we take the
induced measure in the sense of Definition~\ref{se:defgr} which is
characterized as follows.

\begin{thm}
Let $\A$ be an abelian length category. 
The Gabriel-Roiter measure induces via 
$$\m(X)\leq\m(Y)\quad:\Longleftrightarrow\quad\ell^*(X)\leq\ell^*(Y)\quad\text{for}\quad
X,Y\in\ind\A$$ a relation on $\ind\A$.  This is the unique transitive
relation on $\ind\A$ satisfying for all objects $X,Y$ the following:
\begin{enumerate}
\item[(GR1)] $X\subseteq Y$ implies $\m(X)\leq\m(Y)$.
\item[(GR2)] $\m(X)=\m(Y)$ implies $\ell(X)=\ell(Y)$.
\item[(GR3)] $\m(X')<\m(Y)$ for all $X'\subset X$ and $\ell(X)\geq\ell(Y)$ imply
$\m(X)\leq\m(Y)$.
\end{enumerate}
\end{thm}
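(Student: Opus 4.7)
The plan is to deduce the statement directly from the axiomatic theorem of Section~\ref{se:axiom}, applied to $S=\ind\A$ with the subobject ordering and with length function $\ell$. Before invoking that theorem I would verify the two ambient hypotheses. First, $\subseteq$ is antisymmetric on isomorphism classes: monomorphisms $X\to Y$ and $Y\to X$ between finite length objects force $\ell(X)=\ell(Y)$, and then any monomorphism between finite length objects of equal length is an isomorphism (its cokernel has length zero). Second, $\ell$ is a length function in the sense of Section~1.3: a proper inclusion $X\subsetneq Y$ of isomorphism classes comes from a monomorphism with nonzero cokernel, so additivity of length on short exact sequences yields $\ell(X)<\ell(Y)$.

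With these checks in place, the axiomatic theorem already produces the map $\ell^*\colon\ind\A\to\Ch(\bbN)$ and shows that it satisfies (P1)--(P3). Translated into the notation of the present statement, these are precisely (GR1)--(GR3) for the relation $\m(X)\leq\m(Y):\Longleftrightarrow\ell^*(X)\leq\ell^*(Y)$. Since $\Ch(\bbN)$ is totally ordered, this relation is transitive, which settles the existence half.

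For uniqueness, suppose $\m'$ is any transitive relation on $\ind\A$ satisfying (GR1)--(GR3). I would repeat the induction from the proof of the axiomatic theorem, which uses only transitivity together with (P1)--(P3) and the existence of Gabriel-Roiter filtrations, to conclude that every pair $X,Y\in\ind\A$ is $\m'$-comparable. Thus $\m'$ is a total preorder, and quotienting by the equivalence $\m'(X)=\m'(Y)$ produces a totally ordered set $P'$ and a morphism of partially ordered sets $\ind\A\to P'$ still satisfying (P1)--(P3). The uniqueness clause of the axiomatic theorem then forces $\m'(X)\leq\m'(Y)\Longleftrightarrow\ell^*(X)\leq\ell^*(Y)$ for all $X,Y\in\ind\A$, which is the desired conclusion.

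The main obstacle I anticipate is cosmetic rather than substantive: the axiomatic theorem is phrased in terms of a set map $\m\colon S\to P$ into a partially ordered set, whereas the present statement prescribes a transitive relation on $\ind\A$. Bridging this gap amounts to observing that (GR1)--(GR3) combined with transitivity already force totality, so that the quotient poset is available and the uniqueness clause applies. That observation is just the induction already carried out in the proof of the axiomatic theorem, so no new idea beyond careful bookkeeping is required.
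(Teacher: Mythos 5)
Your proposal is correct and follows essentially the same route as the paper: both deduce the theorem by applying the axiomatic characterization of Section~1 to the poset $\ind\A$ with the length function $\ell$, and both handle uniqueness by passing from the transitive relation $\m'$ to the quotient $\ind\A/\m'$ and invoking the uniqueness clause of that theorem. The only difference is that you first re-derive totality of $\m'$ before quotienting, a step the paper skips because the uniqueness clause already applies to morphisms into arbitrary partially ordered sets (comparability is established inside its proof rather than required as a hypothesis).
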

Here we use the following convention: We write $\m(X)=\m(Y)$ if
$\m(X)\leq\m(Y)$ and $\m(Y)\leq\m(X)$ hold. Morever, we write
$\m(X)<\m(Y)$ if $\m(X)\leq\m(Y)$ and $\m(X)\neq\m(Y)$ hold.

\begin{proof}
The relation $\m(X)=\m(Y)$ defines an equivalence relation on $\ind\A$
and we denote by $\ind\A/\m$ the set of equivalence classes. This set
is partially ordered via $\m$. The canonical map $\ind\A\to\ind\A/\m$
is a morphism of partially ordered sets satisfying the conditions (P1)
-- (P3) from Theorem~\ref{se:axiom}.  Suppose we have another
transitive relation, written $\m'(X)\leq\m'(Y)$ for $X,Y$ in $\ind\A$,
and satisfying (GR1) -- (GR3). We obtain a second morphism
$\ind\A\to\ind\A/\m'$ of partially ordered sets satisfying the
conditions (P1) -- (P3), and we deduce from Theorem~\ref{se:axiom}
that for all $X,Y$
\begin{equation*}
\m'(X)\leq\m'(Y)\quad\Longleftrightarrow\quad\m(X)\leq\m(Y).\qedhere
\end{equation*}
\end{proof}

\begin{exm}
(1) Let $X\in\A$ be {\em uniserial}, that is, $X$ has a unique composition
series. Then $\ell^*(X)=\{1,2,\ldots,\ell(X)\}$.

(2)  Let $X\in\A$ be an indecomposable object of length at most three. Then
$$\ell^*(X)=\begin{cases} \{1\}&\text{if }\ell(X)=1,\\
\{1,2\}&\text{if }\ell(X)=2,\\
\{1,2,3\}&\text{if }\ell(X)=3\text{ and }\ell(\soc X)=1,\\
\{1,3\}&\text{if }\ell(X)=3\text{ and }\ell(\soc X)\neq 1.\\
\end{cases}$$
Here, $\soc X$ denotes the {\em socle} of $X$, that is, the sum of all simple subobjects.

(3) Let $k$ be a field and consider the category $\A$ of $k$-linear
    representations of the following quiver.
$$1 \longleftarrow 2\longrightarrow 3$$ An indecomposable
representation $V_1 \leftarrow V_2\rightarrow V_3$ is determined by
its dimension vector $(d_1d_2d_3)$, where $d_i=\dim_kV_i$. The
following Hasse diagram displays the partial order on $\ind\A$, where
the layer indicates the length of each object.
$$\xymatrix@=0.6em{ 3&&&&&\bullet\ar@{-}[ldd]\ar@{-}[rdd]\\
2&&&\bullet\ar@{-}[rd]&&&&\bullet\ar@{-}[ld]\\
1&&\bullet&&\bullet&&\bullet\\
\ell&&(010)&(110)&(100)&(111)&(001)&(011)\\ }$$ From this diagram one
computes the Gabriel-Roiter measure $\ell^*(X)$ of each indecomposable
object $X$ and obtains the following ordering:
\begin{equation*}
\ell^*(010)=\ell^*(100)=\ell^*(001)=\{1\}<\ell^*(111)=\{1,3\}<\ell^*(110)=\ell^*(011)=\{1,2\}
\end{equation*}
\end{exm}

\subsection{Basic properties}
Recall from (\ref{se:def}) that we have established the
following property of the Gabriel-Roiter measure.
\begin{enumerate}
\item[(GR4)] $\m(X)\leq \m(Y)$ or $\m(Y)\leq\m(X)$ for $X,Y$ in $\ind\A$.
\item[(GR5)] $\{\m(X)\mid X\in \ind\A\text{ and }\ell(X)\leq n\}$ is
  finite for all $n\in\bbN$.
\end{enumerate}

Next we discuss further properties of the Gabriel-Roiter measure which
depend on the fact that $\A$ is a length category.

\subsection{Gabriel-Roiter filtrations}\label{se:filt}
Let $X,Y\in\ind\A$. We say that $X$ is a {\em Gabriel-Roiter
predecessor} of $Y$ if $X\subset Y$ and $\m(X)=\max_{Y'\subset
Y}\m(Y')$. Note that each object $Y\in\ind\A$ which is not simple
admits a Gabriel-Roiter predecessor, by (GR4) and (GR5). A
Gabriel-Roiter predecessor $X$ of $Y$ is usually not unique, but the
value $\m(X)$ is determined by $\m(Y)$.

A sequence $$X_1\subset X_2\subset \ldots\subset X_{n-1} \subset
X_n=X$$ in $\ind\A$ is called a {\em Gabriel-Roiter filtration} of $X$
if $X_1$ is simple and $X_{i-1}$ is a Gabriel-Roiter predecessor of
$X_i$ for all $1< i\leq n$. Clearly, each $X$ admits such a filtration
and the values $\m(X_i)$ are uniquely determined by $X$.

\begin{prop}
Let $X,Y\in\ind\A$.
\begin{enumerate}
\item[(GR6)] $X\in\ind\A$ is simple if and only if $\m(X)\leq\m(Y)$ for
all $Y\in\ind\A$.
\item[(GR7)] Suppose that $\m(X)<\m(Y)$. Then there are $Y'\subset  Y''\subseteq Y$
in $\ind\A$ such that
$Y'$ is a Gabriel-Roiter predecessor of $Y''$ with $\m(Y')\leq\m(X)<\m(Y'')$ and
$\ell(Y')\leq\ell(X)$.
\end{enumerate}
\end{prop}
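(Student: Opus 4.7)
My plan for (GR6) is to first verify that all simple objects in $\ind\A$ carry the same Gabriel-Roiter measure, and then exhibit a simple subobject to compare with. Since a simple $S$ has no proper subobject in $\ind\A$, the premise ``$\m(X')<\m(Y)$ for all $X'\subset X$'' in (GR3) is vacuous; applying (GR3) to two simples (both of length $1$) in each direction yields $\m(S)=\m(S')$. For any $Y\in\ind\A$, finite length furnishes a simple subobject $S\subseteq Y$, so (GR1) gives $\m(S)\leq\m(Y)$, handling the forward direction. For the converse, if $X$ is not simple then any simple subobject $S\subset X$ satisfies $\m(S)\leq\m(X)$ by (GR1) and $\m(S)\neq\m(X)$ by (GR2), so $\m(S)<\m(X)$, contradicting the alleged minimality of $\m(X)$.

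For (GR7), I plan to exploit a Gabriel-Roiter filtration $Y_1\subset Y_2\subset\ldots\subset Y_n=Y$, whose existence is guaranteed by Section~\ref{se:filt}. Each inclusion $Y_{i-1}\subset Y_i$ combined with (GR1) and (GR2) produces the strictly increasing sequence $\m(Y_1)<\m(Y_2)<\ldots<\m(Y_n)=\m(Y)$. Since $Y_1$ is simple, (GR6) gives $\m(Y_1)\leq\m(X)$, while the hypothesis $\m(X)<\m(Y)$ places $\m(X)$ strictly below $\m(Y_n)$. Hence there is a smallest index $i\geq 2$ with $\m(Y_i)>\m(X)$, and setting $Y':=Y_{i-1}$, $Y'':=Y_i$ immediately delivers $Y'\subset Y''\subseteq Y$, the Gabriel-Roiter predecessor property, and $\m(Y')\leq\m(X)<\m(Y'')$.

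The main obstacle is the length inequality $\ell(Y')\leq\ell(X)$. Here my plan is to invoke the contrapositive of (GR3) with $Y''$ in the role of $X$ and $X$ in the role of $Y$: since $\m(Y'')>\m(X)$ (and the two are comparable by (GR4)), at least one hypothesis of (GR3) must fail, yielding either (a) an indecomposable $Z\subset Y''$ with $\m(Z)\geq\m(X)$, or (b) $\ell(Y'')<\ell(X)$. In case (a), the Gabriel-Roiter predecessor property forces $\m(Y')\geq\m(Z)\geq\m(X)$; combined with the already established $\m(Y')\leq\m(X)$ this gives $\m(Y')=\m(X)$, and (GR2) upgrades this to $\ell(Y')=\ell(X)$. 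In case (b), $\ell(Y')<\ell(Y'')<\ell(X)$. Either way $\ell(Y')\leq\ell(X)$, which closes the argument.
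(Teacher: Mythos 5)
Your proof is correct, and while (GR6) and the construction of $Y'\subset Y''$ in (GR7) follow the paper's route, your derivation of the length inequality $\ell(Y')\leq\ell(X)$ is genuinely different. The paper obtains it by comparing the Gabriel-Roiter filtration of $Y$ with one of $X$, invoking the induction scheme from the proof of the axiomatic characterization (matching the two filtrations term by term up to the last index where the measures agree, then comparing lengths at the first divergence). You instead take the contrapositive of (GR3) applied to the pair $(Y'',X)$: since $\m(Y'')\leq\m(X)$ fails, either some indecomposable $Z\subset Y''$ has $\m(Z)\geq\m(X)$ --- in which case the maximality defining the Gabriel-Roiter predecessor forces $\m(Y')\geq\m(Z)\geq\m(X)$, hence $\m(Y')=\m(X)$ and $\ell(Y')=\ell(X)$ by (GR2) --- or else $\ell(Y'')<\ell(X)$ and a fortiori $\ell(Y')<\ell(X)$. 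This is a clean, self-contained argument that avoids re-running the filtration comparison; the paper's version buys nothing extra here except uniformity with the earlier proof, so your route is arguably preferable. Two small points worth making explicit: the case split uses (GR4) to turn ``not $\m(Z)<\m(X)$'' into ``$\m(Z)\geq\m(X)$'', which you do note, and the forward direction of (GR6) really does need your observation that all simples share the same measure (via the vacuous hypothesis in (GR3)), a step the paper leaves implicit.
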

\begin{proof}
For (GR6), one uses that each indecomposable object has a simple
subobject.  To prove (GR7), fix a Gabriel-Roiter filtration
$Y_1\subset Y_2\subset \ldots \subset Y_n=Y$ of $Y$. We have
$\m(Y_1)\leq\m(X)$ because $Y_1$ is simple. Using (GR4), there exists
some $i$ such that $\m(Y_i)\leq\m(X)<\m(Y_{i+1})$. Now put $Y'=Y_{i}$
and $Y''=Y_{i+1}$. Comparing the filtration of $Y$ with a
Gabriel-Roiter filtration of $X$ (as in the proof of
Theorem~\ref{se:axiom}), we find that $\ell(Y')\leq\ell(X)$.
\end{proof}

\begin{exm}
Let $X\in\A$ be uniserial. Then the composition series is a
Gabriel-Roiter filtration of $X$.
\end{exm}

\subsection{The main property}\label{se:main}
The following main property of the Gabriel-Roiter measure is crucial
for the whole theory.

\begin{prop}[Gabriel]
Let $X,Y_1,\ldots,Y_r\in\ind\A$.
\begin{enumerate}
\item[(GR8)] Suppose that $X\subseteq Y=\oplus_{i=1}^rY_i$. Then
$\m(X)\leq\max\m(Y_i)$ and $X$ is a direct summand of $Y$ if
$\m(X)=\max\m(Y_i)$.
\end{enumerate}
\end{prop}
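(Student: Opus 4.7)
The plan is to prove both halves of (GR8)---the inequality $\m(X)\le M:=\max_i\m(Y_i)$ and the statement that $\m(X)=M$ forces $X$ to be a direct summand of $Y$---simultaneously by induction on $\ell(X)$, using the axiomatic characterization (GR1)--(GR3) together with Krull--Remak--Schmidt.

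The base case $\ell(X)=1$ is direct. Since $X$ is simple, (GR6) gives $\m(X)\le\m(Y_i)$ for every $i$, hence $\m(X)\le M$. If $\m(X)=M$, then the inclusion $\iota\colon X\to Y$ has some nonzero component $\pi_j\iota\colon X\to Y_j$, which is a monomorphism by simplicity of $X$. Combined with $\m(X)=M=\m(Y_j)$, axioms (GR1) and (GR2) force $\ell(Y_j)=1$, so $\pi_j\iota$ is an isomorphism and $\iota$ is a split monomorphism with splitting $(\pi_j\iota)^{-1}\circ\pi_j$.

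For the inductive step with $\ell(X)>1$, apply the induction hypothesis to each indecomposable proper subobject $X'\subset X$ (viewed as a subobject of $Y$) to obtain $\m(X')\le M$. Fix a Gabriel-Roiter predecessor $X_{n-1}\subset X$, so $\max_{X'\subset X}\m(X')=\m(X_{n-1})\le M$. When $\m(X_{n-1})<M$ and some summand $Y_j$ with $\m(Y_j)=M$ satisfies $\ell(X)\ge\ell(Y_j)$, axiom (GR3) applies directly and yields $\m(X)\le M$; the equality case is then treated as in the base case by analysing the components $\pi_j\iota$ together with the locality of $\End(X)$. When $\m(X_{n-1})=M$, the inductive version of part (b) applied to $X_{n-1}\subset Y$ splits $X_{n-1}$ off $Y$, giving $Y=X_{n-1}\oplus Y^{*}$ with $\ell(Y^{*})<\ell(Y)$; projection onto $Y^{*}$ produces an inclusion $X/X_{n-1}\hookrightarrow Y^{*}$ whose Krull-Schmidt summands are strictly shorter than $X$, allowing a secondary induction on $\ell(Y)$. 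The remaining subcase, where $\ell(X)<\ell(Y_j)$ for every summand $Y_j$ realising the maximum, is handled by reverting to the chain-length definition $\ell^{*}(X)=\max\{\ell(C)\mid C\in\Ch(\ind\A,X)\}$ and lifting a chain witnessing $\ell^{*}(X)$ through $\iota$ to a chain of indecomposable subobjects of some $Y_i$, obtaining $\ell^{*}(X)\le\ell^{*}(Y_i)\le M$ in the lexicographic order.

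The main obstacle is the tight interdependence of parts (a) and (b) within the induction: whenever (GR3) cannot be applied directly---either because equality $\m(X')=M$ is attained on some proper indecomposable $X'\subset X$, or because the length condition $\ell(X)\ge\ell(Y_j)$ fails---one must feed the splitting content of (b) (available on shorter subobjects via the induction) back into the argument for (a). Arranging this simultaneous induction cleanly, and in particular handling the chain-lifting subcase rigorously, is the delicate step.
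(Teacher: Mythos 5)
Your overall strategy (induction plus the axioms (GR1)--(GR3) and Krull--Remak--Schmidt) is the right one, but the induction is set up on the wrong quantity, and the two subcases where this hurts are exactly the ones you leave as sketches. The paper inducts on $\ell(X)+\ell(Y)$, and the whole proof turns on one reduction: if some component $\p_i\colon X\to Y_i$ of the inclusion is not an epimorphism, replace $Y_i$ by the image of $\p_i$, decomposed into indecomposables $Y_{ij}\subseteq Y_i$; this strictly decreases $\ell(Y)$ while fixing $\ell(X)$, so the induction hypothesis applies and (GR1) gives $\m(X)\leq\max\m(Y_{ij})\leq\max\m(Y_i)$. After this reduction every $\p_i$ is epi, hence $\ell(X)\geq\ell(Y_i)$ for all $i$, and your ``remaining subcase'' where $\ell(X)<\ell(Y_j)$ for all $j$ realising the maximum simply never occurs. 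Because you induct on $\ell(X)$ alone, you cannot perform this reduction, and the chain-lifting argument you substitute for it does not work: a chain $C_1\subset\cdots\subset C_m=X$ of indecomposable subobjects of $X$ need not transport to a chain of subobjects of any single $Y_i$ --- the composites $C_k\to Y\to Y_i$ need not be monomorphisms, and different $C_k$ may require different $i$. This is precisely the difficulty (GR8) exists to overcome, so it cannot be waved through.

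Two further points. First, your case $\m(X_{n-1})=M$ is vacuous, and the detour through $X/X_{n-1}\hookrightarrow Y^{*}$ both misses this and leads nowhere (a bound on $\m(X/X_{n-1})$ says nothing about $\m(X)$): if the inductive splitting statement puts $X_{n-1}$ as a direct summand of $Y$, then the retraction $Y\to X_{n-1}$ restricts along $X_{n-1}\subseteq X\subseteq Y$ to split $X_{n-1}$ off $X$, contradicting indecomposability of $X$; this is how the paper excludes $\m(X')=\max\m(Y_i)$ for every proper indecomposable $X'\subset X$ before invoking (GR3). Second, your equality case in the inductive step (``analysing the components together with the locality of $\End(X)$'') is not an argument; locality of $\End(X)$ does not produce a split mono component. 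The paper's argument is: if no component $\p_k$ with $\m(Y_k)=\max\m(Y_i)$ is an epimorphism, replace those $Y_k$ by their images and use (GR1), (GR2) and the induction hypothesis to get $\m(X)<\m(Y_k)$, a contradiction; hence some such $\p_k$ is epi, and since (GR2) forces $\ell(X)=\ell(Y_k)$, it is an isomorphism and $X$ splits off $Y$. Your base case $\ell(X)=1$ is fine, but the inductive step needs to be rebuilt around the image-replacement reduction and the stronger induction variable.
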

\begin{proof}
The proof only uses the properties (GR1) -- (GR3) of $\m$.  Fix a
monomorphism $\p\colon X\to Y$. We proceed by induction on
$n=\ell(X)+\ell(Y)$.  If $n= 2$, then $\p$ is an isomorphism and the
assertion is clear. Now suppose $n>2$.  We can assume that
for each $i$ the $i$th component $\p_i\colon X\to Y_i$ of $\p$ is an
epimorphism. Otherwise choose for each $i$ a decomposition
$Y_i'=\oplus_j Y_{ij}$ of the image of $\p_i$ into indecomposables.
Then we use (GR1) and have $\m(X)\leq\max\m(Y_{ij})\leq\max\m(Y_i)$
because $\ell(X)+\ell(Y')<n$ and $Y_{ij}\subseteq Y_i$ for all $j$. Now
suppose that each $\p_i$ is an epimorphism. Thus
$\ell(X)\geq\ell(Y_i)$ for all $i$.  Let $X'\subset X$ be a proper
indecomposable subobject.  Then $\m(X')\leq\max\m(Y_i)$ because
$\ell(X')+\ell(Y)<n$, and $X'$ is a direct summand if
$\m(X')=\max\m(Y_i)$. We can exclude the case that
$\m(X')=\max\m(Y_i)$ because then $X'$ is a proper direct summand of
$X$, which is impossible. Now we apply (GR3) and obtain
$\m(X)\leq\max\m(Y_i)$.  Finally, suppose that
$\m(X)=\max\m(Y_i)=\m(Y_k)$ for some $k$. We claim that we can choose
$k$ such that $\p_k$ is an epimorphism.  Otherwise, replace all $Y_i$
with $\m(X)=\m(Y_i)$ by the image $Y_i'=\oplus_j Y_{ij}$ of $\p_i$ as
before.  We obtain $\m(X)\leq\max\m(Y_{ij})<\m(Y_k)$ since
$Y_{kj}\subset Y_k$ for all $j$, using (GR1) and (GR2).  This is a
contradiction.  Thus $\p_k$ is an epimorphism and in fact an
isomorphism because $\ell(X)=\ell(Y_k)$ by (GR2). In particular, $X$ is
a direct summand of $\oplus_iY_i$. This completes the proof.
\end{proof}

\begin{cor}
Let $X,Y\in\ind\A$ and suppose that $X\subset Y$ with
$\m(X)=\max_{Y'\subset Y}\m(Y')$.  If $X\subseteq U\subset Y$ in $\A$, then
$X$ is a direct summand of $U$.
\end{cor}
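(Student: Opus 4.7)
The plan is to reduce the corollary to a direct application of the main property (GR8) proved in Proposition~\ref{se:main}. The key observation is that every indecomposable summand of $U$ is a proper indecomposable subobject of $Y$, so its Gabriel-Roiter measure is bounded by the maximum $\m(X)$.

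First I would use that $\A$ is a length category so that $U$, having finite length, admits a Krull-Remak-Schmidt decomposition $U = \bigoplus_{i=1}^{r} U_i$ into indecomposables. Since the composition $U_i \hookrightarrow U \hookrightarrow Y$ is a monomorphism and $U \subset Y$ is proper, each $U_i$ is a proper indecomposable subobject of $Y$. Hence, by the hypothesis that $\m(X) = \max_{Y' \subset Y} \m(Y')$ taken over indecomposable proper subobjects, we get $\m(U_i) \leq \m(X)$ for every $i$.

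Next I would apply (GR8) to the monomorphism $X \hookrightarrow U = \bigoplus_i U_i$. This yields
\[
\m(X) \leq \max_i \m(U_i) \leq \m(X),
\]
so equality holds throughout and $\m(X) = \max_i \m(U_i)$. The second clause of (GR8) then gives immediately that $X$ is a direct summand of $U$, which is the desired conclusion.

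There is no real obstacle here; the entire argument is a bookkeeping application of (GR8). The only tiny point to check is that each $U_i$ is genuinely a proper subobject of $Y$, which follows from $U \subsetneq Y$, ensuring that the maximality hypothesis on $\m(X)$ applies to all the $U_i$.
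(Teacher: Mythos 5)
Your argument is correct and is essentially the paper's own proof: decompose $U=\oplus_i U_i$ into indecomposables, note each $U_i$ is a proper subobject of $Y$ so $\m(U_i)\leq\m(X)$ by the maximality hypothesis, and conclude from the equality case of (GR8) applied to $X\subseteq\oplus_i U_i$ that $X$ is a direct summand of $U$. The only difference is that you spell out the sandwich $\m(X)\leq\max_i\m(U_i)\leq\m(X)$ explicitly, which the paper leaves implicit.
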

\begin{proof}
Let $U=\oplus_i U_i$ be a decomposition into indecomposables. Now apply
(GR8).  We obtain $\m(X)\leq\max\m(U_i)< \m(Y)$ and our assumption
on $X\subset Y$ implies that $X$ is a direct summand of $U$.
\end{proof}

\begin{exm}
(1) Let $Y\in\ind\A$ and suppose that $\m(X)\leq\m(Y)$ for all
$X\in\ind\A$. Then $Y$ is an injective object, because every
monomorphism $Y\to Z$ splits by (GR8). 

(2) Suppose that $\A$ has a cogenerator $Q$, that is, each object in
$\A$ admits a monomorphism into a direct sum of copies of $Q$. Let
$Q=\oplus_i Q_i$ be a decomposition into indecomposable objects.  Then
$\m(X)\leq\max\m(Q_i)$ for all $X\in\ind\A$.
\end{exm}

The Gabriel-Roiter measure $\ell^*\colon\ind\A\to\Ch(\bbN)$ for $\A$
can be extended to a measure defined for all objects in $\A$, not only
the indecomposable ones. Let $X=\oplus_i X_i$ be an object written as
a direct sum of indecomposable objects. Then we define
$$\ell^*(X)=\max\ell^*(X_i).$$

\begin{cor}
The relation 
$$\m(X)\leq\m(Y)\quad:\Longleftrightarrow\quad\ell^*(X)\leq\ell^*(Y)\quad\text{for}\quad
X,Y\in\A$$ induces a measure for the set of isomorphism classes of
$\A$.
\end{cor}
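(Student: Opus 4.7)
The plan is to check the three axioms (M1), (M2), (M3) of a measure from the initial Definition, for the relation induced by the extended $\ell^*$. Two of them are essentially free, and the third is exactly where the main property (GR8) does the work.

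First I would note that $\ell^*(X)$ is well-defined on isomorphism classes: by the Krull--Remak--Schmidt theorem, any finite length object admits a decomposition $X=\oplus_i X_i$ into indecomposables which is unique up to isomorphism and permutation of summands, so $\max_i\ell^*(X_i)\in\Ch(\bbN)$ depends only on the isomorphism class of $X$. Axioms (M1) and (M2) are then immediate, because $\Ch(\bbN)$ carries the lexicographic order which is a total order on a set: transitivity of the relation $\ell^*(X)\leq\ell^*(Y)$ gives (M1), and totality gives (M2).

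The substantive step is (M3): if $X\subseteq Y$ in $\A$, then $\ell^*(X)\leq\ell^*(Y)$. Fix decompositions into indecomposables $X=\oplus_j X_j$ and $Y=\oplus_i Y_i$. For each $j$, the composition $X_j\hookrightarrow X\hookrightarrow Y=\oplus_i Y_i$ is a monomorphism from an indecomposable into a direct sum of indecomposables, so Gabriel's main property (GR8) yields $\m(X_j)\leq \max_i\m(Y_i)$, that is, $\ell^*(X_j)\leq\max_i\ell^*(Y_i)=\ell^*(Y)$. Taking the maximum over $j$ gives $\ell^*(X)=\max_j\ell^*(X_j)\leq\ell^*(Y)$, which is (M3).

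The only potential obstacle is the well-definedness step, which is not really an obstacle since Krull--Remak--Schmidt was recorded in the section on length categories; everything else is a one-line consequence of (GR8) and the fact that $\Ch(\bbN)$ is totally ordered.
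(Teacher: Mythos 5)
Your proposal is correct and follows the same route as the paper: the paper's proof simply states that (M1) and (M2) are automatic and that (M3) is an immediate consequence of (GR8), which is exactly the argument you spell out (applying (GR8) to each indecomposable summand of $X$ and taking the maximum). Your additional remark on well-definedness via the Krull--Remak--Schmidt theorem is a reasonable, if implicit, point that the paper leaves unstated.
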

\begin{proof}
We need to verify (M1) -- (M3) from Definition~\ref{se:defgr}. The
first two conditions are automatic and the third is an immediate
consequence of (GR8).
\end{proof}

\subsection{Gabriel-Roiter inclusions}

Let $X,Y\in\ind\A$. An inclusion $X\subseteq Y$ is called {\em Gabriel-Roiter
inclusion} if $\m(X)=\max_{Y'\subset Y}\m(Y')$. Thus we have a Gabriel-Roiter inclusion
$X\subseteq Y$ if and only if $X$ is a Gabriel-Roiter predecessor of $Y$.

\begin{prop}[Ringel]
Let $X,Y\in\ind\A$ and suppose that $X\subset Y$ is a Gabriel-Roiter inclusion.
Then $Y/X$ is an indecomposable object.
\end{prop}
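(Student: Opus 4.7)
The plan is to argue by contradiction: suppose $Y/X=U_1\oplus U_2$ with both $U_i\neq 0$, and deduce that $Y$ itself splits nontrivially, contradicting $Y\in\ind\A$.

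First I would pull the decomposition of $Y/X$ back to $Y$. Writing $\pi\colon Y\to Y/X$ for the canonical projection and setting $Z_i:=\pi^{-1}(U_i)$, the subobjects $Z_1,Z_2$ satisfy $X\subsetneq Z_i\subsetneq Y$ (the left inclusion is strict because $U_i\neq 0$, the right because $U_j\neq 0$ for $j\neq i$), along with $Z_1\cap Z_2=X$ and $Z_1+Z_2=Y$. Because $X\subset Y$ is a Gabriel-Roiter inclusion and each $Z_i$ sits strictly between $X$ and $Y$, the corollary to (GR8) proved above applies and produces splittings $Z_i=X\oplus V_i$ with $V_i\cong U_i\neq 0$.

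Next I would combine these splittings into the canonical morphism
$$\phi\colon X\oplus V_1\oplus V_2\lto Y,$$
defined by the given inclusions on each summand. Surjectivity of $\phi$ is immediate from $Y=Z_1+Z_2=X+V_1+V_2$. For injectivity I would apply $\pi$: if $x+v_1+v_2=0$ in $Y$ with $x\in X$ and $v_i\in V_i$, then $\pi(v_1)+\pi(v_2)=0$ in the direct sum $U_1\oplus U_2$, which forces $\pi(v_1)=\pi(v_2)=0$; hence $v_i\in X\cap V_i=0$, and consequently $x=0$ as well. Therefore $\phi$ is an isomorphism and $Y\cong X\oplus V_1\oplus V_2$.

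Since $Y$ is indecomposable and $X\neq 0$, the Krull-Remak-Schmidt Theorem forces $V_1=V_2=0$, so $U_1=U_2=0$, contradicting our assumption. The main obstacle is the transition from splittings of the individual $Z_i$ to a splitting of $Y$: once the corollary provides complements of $X$ inside each $Z_i$, the pullback/pushout relations $X=Z_1\cap Z_2$ and $Y=Z_1+Z_2$ are precisely what is needed to glue these complements into a direct sum decomposition of $Y$, and verifying the injectivity of $\phi$ is the step where the direct sum assumption on $Y/X$ is essentially used.
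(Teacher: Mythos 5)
Your argument is correct and is essentially the paper's proof: both pull the decomposition of $Y/X$ back along the projection $\pi\colon Y\to Y/X$ and apply the Corollary to (GR8) to an intermediate subobject $X\subseteq\pi^{-1}(U_i)\subset Y$, obtaining a splitting that contradicts the indecomposability of $Y$. The only differences are cosmetic --- the paper splits just one preimage and finishes by exhibiting a split monomorphism from a summand of $Y/X$ into the indecomposable $Y$, whereas you split both preimages and assemble the internal decomposition $Y\cong X\oplus V_1\oplus V_2$; your element chases should be read as diagram arguments (or justified via a faithful exact embedding), since $\A$ is an abstract abelian length category rather than a module category.
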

\begin{proof}
Let $Z=Y/X$ and assume that $Z=Z'\oplus Z''$ with $Z''\neq 0$.
We obtain the following commutative diagram with exact rows and columns.
$$\xymatrix{ &&0\ar[d]&0\ar[d]\\
0\ar[r]&X\ar[r]\ar@{=}[d]&Y'\ar[r]\ar[d]&Z'\ar[r]\ar[d]^\inc&0\\
0\ar[r]&X\ar[r]^\inc&Y\ar[d]\ar[r]\ar&Z\ar[d]\ar[r]&0\\
&&Z''\ar[d]\ar@{=}[r]&Z''\ar[d]\\ &&0&0 }$$ We have $X\subseteq
Y'\subset Y$ and therefore the monomorphism $X\to Y'$ splits by
Corollary~\ref{se:main}.  Thus the inclusion $Z'\to Z$
factors through $Y\to Z$ via a split monomorphism $Z'\to Y$. We
conclude that $Z'=0$ since $Y$ is indecomposable.
\end{proof}

\begin{rem}
The argument is borrowed from Auslander and Reiten. They show that the
cokernel of an irreducible monomorphism between indecomposable objects
is indecomposable.
\end{rem}

\begin{cor}
Let $Y$ be an indecomposable object in $\A$ which is not simple.  Then
there exists a short exact sequence $0\to X\to Y\to Z\to 0$ in $\A$ such that
$X$ and $Z$ are indecomposable.
\end{cor}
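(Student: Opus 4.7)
The plan is to produce the desired short exact sequence by choosing $X\subset Y$ to be a Gabriel-Roiter predecessor of $Y$ and then appealing to Ringel's proposition just established.

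First I would note that since $Y$ is indecomposable and not simple, there exist proper indecomposable subobjects $Y'\subset Y$ (for instance, any simple subobject of $Y$). By (GR4) the values $\m(Y')$ for $Y'\subset Y$ in $\ind\A$ are totally ordered, and by (GR5) the set $\{\m(Y')\mid Y'\in\ind\A,\ \ell(Y')\leq\ell(Y)\}$ is finite, so the maximum $\max_{Y'\subset Y}\m(Y')$ is attained. Hence a Gabriel-Roiter predecessor $X$ of $Y$ exists, as already observed in Section~\ref{se:filt}.

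Next I would invoke Ringel's proposition (the previous result in the paper): if $X\subset Y$ is a Gabriel-Roiter inclusion, then the quotient $Y/X$ is indecomposable. Setting $Z=Y/X$ therefore gives a short exact sequence
\[0\lto X\lto Y\lto Z\lto 0\]
in $\A$, where by construction $X$ is indecomposable and by Ringel's result $Z$ is indecomposable as well.

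There is no real obstacle here: the content is entirely carried by Ringel's proposition, and the corollary is just the observation that a Gabriel-Roiter predecessor is guaranteed to exist whenever $Y$ is non-simple indecomposable, which is immediate from (GR4) and (GR5).
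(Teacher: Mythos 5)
Your proof is correct and follows exactly the paper's route: the paper's one-line proof is ``Take $X\subset Y$ with $\m(X)=\max_{Y'\subset Y}\m(Y')$,'' i.e.\ a Gabriel-Roiter predecessor, whose existence was noted in the section on Gabriel-Roiter filtrations via (GR4) and (GR5), combined with Ringel's proposition that the quotient is then indecomposable. You have merely spelled out the existence argument that the paper leaves implicit.
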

\begin{proof}
Take $X\subset Y$ with $\m(X)=\max_{Y'\subset Y}\m(Y')$. 
\end{proof}

\section{Finiteness results}

In this section, Ringel's refinement of the first Brauer-Thrall
conjecture is presented \cite{R1}. More precisely, we prove a
structural result about the partial order of the values of the
Gabriel-Roiter measure.

\subsection{Covariant finiteness}\label{se:cov}

A subcategory $\C$ of $\A$ is called {\em covariantly finite} if every
object $X\in\A$ admits a {\em left $\C$-approximation}, that is, a map
$X\to Y$ with $Y\in\C$ such that the induced map
$\Hom_\A(Y,C)\to\Hom_\A(X,C)$ is surjective for all $C\in\C$. We have
also the dual notion: a subcategory $\C$ is {\em contravariantly
finite} if every object in $\A$ admits a {\em right
$\C$-approximation}.

\begin{lem}
Let $\C$ be a subcategory of $\A$ which is closed under taking direct
sums and subobjects. Then $\C$ is a covariantly finite subcategory of
$\A$.
\end{lem}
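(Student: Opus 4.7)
The plan is to construct, for each $X\in\A$, a left $\C$-approximation as a quotient of $X$ itself. The idea is that the ``best'' such approximation should be $X\to X/U_0$ where $U_0\subseteq X$ is the smallest subobject with $X/U_0\in\C$; one then just has to show such a smallest $U_0$ exists.

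First I would introduce the set
\[\U_X=\{U\subseteq X\mid X/U\in\C\}.\]
Note $X\in\U_X$ (take $0\in\C$, which is a subobject of any object of $\C$, which is nonempty because for a reasonable reading of ``subcategory'' the zero object is in $\C$; alternatively $\C$ is closed under the empty direct sum). The first key step is to show that $\U_X$ is closed under finite intersections. Given $U_1,U_2\in\U_X$, the canonical map
\[X/(U_1\cap U_2)\lto X/U_1\oplus X/U_2\]
is a monomorphism. Since $\C$ is closed under direct sums, $X/U_1\oplus X/U_2\in\C$, and since $\C$ is closed under subobjects, $X/(U_1\cap U_2)\in\C$, so $U_1\cap U_2\in\U_X$.

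Next I would use that $X$ has finite length, hence is artinian, so $\U_X$ has a minimal element $U_0$ with respect to inclusion. The intersection-closure then forces $U_0$ to be the unique smallest element: for any $U\in\U_X$, we have $U_0\cap U\in\U_X$ and $U_0\cap U\subseteq U_0$, so by minimality $U_0\cap U=U_0$, i.e.\ $U_0\subseteq U$. Setting $Y=X/U_0\in\C$ and $\alpha\colon X\to Y$ the quotient map, I claim $\alpha$ is a left $\C$-approximation. Given any $f\colon X\to C$ with $C\in\C$, the image of $f$ is a subobject of $C$, hence lies in $\C$; but $\Im f\cong X/\Ker f$, so $\Ker f\in\U_X$, so $U_0\subseteq\Ker f$, and $f$ factors as $f=g\comp\alpha$ for a (unique) $g\colon Y\to C$. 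This gives surjectivity of $\Hom_\A(Y,C)\to\Hom_\A(X,C)$.

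The only step that is not immediate is the intersection-closure of $\U_X$, and this is handled by the embedding into $X/U_1\oplus X/U_2$ together with the two closure hypotheses on $\C$. Once that is in hand, everything else is a routine consequence of $X$ having finite length.
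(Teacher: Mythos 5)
Your proof is correct and is essentially the paper's own argument: the paper takes $X'$ minimal among kernels of maps $X\to Y$ with $Y\in\C$ and declares $X\to X/X'$ a left $\C$-approximation, which is exactly your minimal $U_0\in\U_X$. You have merely made explicit the details the paper leaves implicit, namely the intersection-closure of $\U_X$ via the embedding into $X/U_1\oplus X/U_2$ and the resulting uniqueness of the smallest kernel.
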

\begin{proof}
Fix $X\in\A$. Let $X'\subseteq X$ be minimal among the kernels of all
maps $X\to Y$ with $Y\in\C$. Then the canonical map $X\to X/X'$ is a
left $\C$-approximation.
\end{proof}
\begin{rem}
The proof shows that the inclusion functor $\C\to\A$ admits a left
adjoint $F\colon\A\to\C$ which takes $X\in\A$ to $X/X'$. Note that the
adjunction map $X\to FX$ is a left $\C$-approximation.
\end{rem}

Let $M$ be any set of values $\m(X)$. Then we define the subcategory
$$\A(M):=\{X\in\A\mid\m(X)\in M\}.$$

\begin{prop}[Ringel]
Let $M$ be a set of values $\m(X)$ which is closed under predecessors,
that is, $\m(X_1)\leq\m(X_2)$ and $\m(X_2)\in M$ implies $\m(X_1)\in
M$. Then $\A(M)$ is a covariantly finite subcategory of $\A$.
\end{prop}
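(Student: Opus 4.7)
The plan is to reduce the proposition to the Lemma of \S\ref{se:cov}, which says that any subcategory of $\A$ closed under direct sums and subobjects is covariantly finite. Thus it is enough to verify these two closure properties for $\A(M)$.

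For direct sums, I would use the extension of the Gabriel-Roiter measure to arbitrary objects defined just before the Gabriel-Roiter inclusions subsection: if $X=\oplus_i X_i$ is a decomposition into indecomposables, then $\ell^*(X)=\max_i\ell^*(X_i)$. Consequently, for any two objects $X_1,X_2\in\A(M)$, the value $\m(X_1\oplus X_2)$ equals one of $\m(X_1)$ or $\m(X_2)$, and both are in $M$ by assumption. Closure under finite direct sums is therefore immediate.

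For subobjects, let $X\in\A(M)$ and let $Y\subseteq X$ be an arbitrary subobject. Write $X=\oplus_i X_i$ and $Y=\oplus_j Y_j$ as direct sums of indecomposables. For each index $j$, the indecomposable $Y_j$ embeds into $X$, so the main property (GR8) gives
$$\m(Y_j)\leq\max_i\m(X_i)=\m(X)\in M.$$
Since $M$ is closed under predecessors, this forces $\m(Y_j)\in M$ for every $j$, and hence $\m(Y)=\max_j\m(Y_j)\in M$, i.e.\ $Y\in\A(M)$.

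Combining these two observations with the Lemma of \S\ref{se:cov} yields the conclusion. The only nontrivial step is closure under subobjects, and its content is precisely (GR8); once that is invoked, everything else is a bookkeeping argument about the extended measure on decomposable objects. I do not anticipate any genuine obstacle beyond making sure the extended measure is used consistently and that (GR8) is applied to the indecomposable summands of $Y$ rather than to $Y$ itself.
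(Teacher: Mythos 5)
Your proof is correct and follows exactly the paper's route: the paper's own proof is the single line ``$\A(M)$ is closed under taking subobjects by (GR8)'', implicitly combined with the Lemma of \S\ref{se:cov}. You have simply filled in the details (closure under direct sums via the extended measure, and the reduction of the subobject case to indecomposable summands), all of which are the intended bookkeeping.
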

\begin{proof}
The subcategory $\A(M)$ is closed under taking subobjects by (GR8).
\end{proof}

\subsection{Almost split morphisms}\label{se:ar}

A map $\p\colon X\to Y$ in $\A$ is called {\em left almost split} if
$\p$ is not a split monomorphism and every map $X\to Y'$ in $\A$ which
is not a split monomorphism factors through $\p$. Dually, a map
$\psi\colon Y\to Z$ is called {\em right almost split} if $\psi$ is
not a split epimorphism and every map $Y'\to Z$ which is not a split
epimorphism factors through $\psi$. For example, if $\A=\mod\La$ for
some artin algebra $\La$, then every indecomposable object $X\in\A$
admits a left almost split map starting at $X$ and a right almost
split map ending at $X$; see \cite[Cor.~V.1.17]{ARS}.

\subsection{Immediate successors}\label{se:suc}
Let $X\in\ind\A$. An {\em immediate successor} of $\m(X)$ is by
definition a minimal element in $$\{\m(Y)\mid Y\in\ind\A\text{ and
}\m(X)< \m(Y)\}.$$

\begin{lem}
Let $X,Y\in\ind\A$ and suppose that $X$ is a Gabriel-Roiter predecessor of $Y$.
If $X\to \bar X$ is a left almost split map in $\A$, then $Y$ is a
factor object of $\bar X$.
\end{lem}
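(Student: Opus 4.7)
The plan is to exploit the left almost split map $\alpha\colon X\to\bar X$ to obtain a factorization of the inclusion $X\hookrightarrow Y$ through $\bar X$, and then use the Corollary of Section~\ref{se:main} to force the image of this factorization to equal $Y$.

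First I would observe that the inclusion $\iota\colon X\to Y$ is not a split monomorphism. Indeed, $X$ is a proper nonzero subobject of the indecomposable object $Y$, so $\iota$ cannot split. By the defining property of the left almost split map $\alpha\colon X\to\bar X$, the morphism $\iota$ factors as $\iota=h\circ\alpha$ for some $h\colon\bar X\to Y$.

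Next, let $U=\Im h\subseteq Y$ and factor $h$ through its image as $\bar X\twoheadrightarrow U\hookrightarrow Y$. Since $\iota$ is mono, the induced map $\iota'\colon X\to U$ (which satisfies $\iota'=\pi\circ\alpha$ where $\pi\colon\bar X\twoheadrightarrow U$) is also a monomorphism, so $X\subseteq U\subseteq Y$. The goal reduces to showing $U=Y$, for then $h$ is an epimorphism and $Y$ is a factor object of $\bar X$.

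Suppose for contradiction that $U\subsetneq Y$. Since $X\subset Y$ is a Gabriel-Roiter inclusion and $X\subseteq U\subset Y$, the Corollary of Section~\ref{se:main} implies that $X$ is a direct summand of $U$; hence $\iota'\colon X\to U$ is a split monomorphism. Choose a retraction $r\colon U\to X$ with $r\iota'=\id_X$. Then $r\pi\colon\bar X\to X$ satisfies
$$(r\pi)\circ\alpha=r\circ(\pi\alpha)=r\iota'=\id_X,$$
so $\alpha$ is a split monomorphism, contradicting the fact that a left almost split map is by definition not split mono. The main (and only nontrivial) step is precisely this reduction, but once one has identified that $X\subseteq U\subset Y$ must force $X$ to be a summand of $U$, the contradiction with the left almost split property is immediate. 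Therefore $U=Y$, completing the proof.
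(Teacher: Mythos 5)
Your proof is correct and follows essentially the same route as the paper: factor the inclusion $X\to Y$ through the left almost split map, pass to the image $U$, and apply the Corollary of Section~\ref{se:main} to rule out $U\subset Y$. The paper's version is just terser; you have usefully spelled out the two implicit steps, namely that $X\to Y$ is not a split monomorphism and that $X$ being a summand of a proper $U$ would force $X\to\bar X$ to split.
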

\begin{proof}
The monomorphism $X\to Y$ factors through $X\to \bar X$ via a map
$\p\colon \bar X\to Y$.  Let $U$ be the image of $\p$. Applying
Corollary~\ref{se:main}, we find that $U=Y$.
\end{proof}

\begin{prop}
Let $X\in\ind\A$ and suppose there exists $n_X\in\bbN$ such that each
$V\in\ind\A$ with $\m(V)\leq\m(X)$ and $\ell(V)\leq\ell(X)$ admits a
left almost split map $V\to\bar V$ with $\ell(\bar V)\leq n_X$.  Then
there exists an immediate successor of $\m(X)$ provided that $\m(X)$
is not maximal.
\end{prop}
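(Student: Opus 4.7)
The plan is to show that one can bound the length of a minimal ``witness'' object realizing any value $\m(Y)>\m(X)$, and then invoke (GR5) to finish.

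First I would fix $Y\in\ind\A$ with $\m(X)<\m(Y)$; such a $Y$ exists because $\m(X)$ is not maximal. Applying (GR7) to the pair $X,Y$ yields indecomposables $Y'\subset Y''\subseteq Y$ with $Y'$ a Gabriel-Roiter predecessor of $Y''$, satisfying $\m(Y')\leq\m(X)<\m(Y'')$ and $\ell(Y')\leq\ell(X)$. Note also that $\m(Y'')\leq\m(Y)$ by (GR1), since $Y''\subseteq Y$.

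Next, because $Y'$ satisfies $\m(Y')\leq\m(X)$ and $\ell(Y')\leq\ell(X)$, the hypothesis provides a left almost split map $Y'\to\bar Y'$ with $\ell(\bar Y')\leq n_X$. Since $Y'$ is a Gabriel-Roiter predecessor of $Y''$, the preceding lemma (\ref{se:suc}) shows that $Y''$ is a factor object of $\bar Y'$, and therefore $\ell(Y'')\leq\ell(\bar Y')\leq n_X$.

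Now I would package this as follows. Consider the set
$$T=\{\m(Y'')\mid Y''\in\ind\A,\ \m(X)<\m(Y''),\ \ell(Y'')\leq n_X\}.$$
By (GR5), the values $\m(Z)$ taken on indecomposables $Z$ of length at most $n_X$ form a finite set, so $T$ is finite. By (GR4), $T$ is totally ordered, hence has a minimum $m^*$. The argument above shows that every $\m(Y)$ with $\m(X)<\m(Y)$ dominates an element of $T$, so $m^*$ is also the minimum of $\{\m(Y)\mid Y\in\ind\A,\ \m(X)<\m(Y)\}$; this is precisely an immediate successor of $\m(X)$.

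The main obstacle is the bound $\ell(Y'')\leq n_X$: without the left almost split hypothesis there is a priori no control on the length of $Y''$, and then (GR5) would not apply. The lemma of \ref{se:suc}, combined with (GR7), is exactly what bridges this gap, so the proof reduces to assembling these ingredients in the correct order.
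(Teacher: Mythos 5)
Your proof is correct and follows the paper's argument exactly: apply (GR7) to produce $Y'\subset Y''\subseteq Y$, use the hypothesis together with the lemma of \ref{se:suc} to bound $\ell(Y'')\leq n_X$, and conclude via (GR5) that the relevant set of values is finite and hence has a minimal element. The only difference is that you spell out the final step (why the minimum of $T$ is an immediate successor) more explicitly than the paper does.
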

\begin{proof}
Let $\m(X)<\m(Y)$. We apply (GR7) and find $Y'\subset  Y''\subseteq Y$ in $\ind\A$
such that $Y'$ is a Gabriel-Roiter predecessor of $Y''$ with
$\m(Y')\leq\m(X)<\m(Y'')\leq \m(Y)$ and $\ell(Y')\leq\ell(X)$. The
preceding lemma implies $\ell(Y'')\leq n_X$, and (GR5) implies that the
number of values $\m(Y'')$ is finite. Thus there exists a minimal
element among those $\m(Y'')$.
\end{proof}

\begin{cor}[Ringel]
Let $\La$ be an artin algebra and $X\in\ind\La$. Then there exists an
immediate successor of $\m(X)$ provided that $\m(X)$ is not maximal.
\end{cor}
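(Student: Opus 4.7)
My plan is to reduce the corollary directly to the preceding proposition by exhibiting, for the given $X$, a uniform integer $n_X$ bounding $\ell(\bar V)$ as $V$ ranges over all indecomposable $V\in\mod\La$ with $\m(V)\leq\m(X)$ and $\ell(V)\leq\ell(X)$. Existence of the left almost split map $V\to\bar V$ itself is granted by \cite[Cor.~V.1.17]{ARS}, as already recalled in \S\ref{se:ar}; the substance is to bound the length of its target uniformly.

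I would first split into two cases. If $V$ is injective indecomposable, one may take the left almost split map to be the canonical surjection $V\to V/\soc V$, so that $\ell(\bar V)<\ell(V)\leq\ell(X)$. If $V$ is non-injective indecomposable, then $V\to\bar V$ is the left-hand map of the Auslander--Reiten sequence
$$0\lto V\lto \bar V\lto \tau^{-1}V\lto 0,$$
so $\ell(\bar V)=\ell(V)+\ell(\tau^{-1}V)$. It therefore suffices to bound $\ell(\tau^{-1}V)$ by a function of $\ell(V)$ and $\La$ alone.

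The central step is this uniform bound. Take a minimal projective presentation $P_1\to P_0\to V\to 0$. Setting $c$ to be the maximal length of an indecomposable projective $\La$-module, one has $\ell(P_0)\leq c\,\ell(V)$ (since $P_0$ covers the semisimple module $V/\rad V$), and using $\Om V\subseteq P_0$ together with the same estimate for $P_1\twoheadrightarrow\Om V$ yields $\ell(P_1)\leq c^2\,\ell(V)$. Since $\tau^{-1}V=\Tr D V$ is a subquotient of $D\Hom_\La(P_1,\La)$, and the length of $\Hom_\La(P,\La)$ as a right $\La$-module is bounded by a constant multiple of $\ell_\La(P)$ for $P$ projective, one obtains $\ell(\tau^{-1}V)\leq C\,\ell(V)$ for a constant $C=C(\La)$. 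Taking $n_X:=(1+C)\ell(X)$, the hypothesis of the preceding proposition is verified and the corollary follows.

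The main obstacle is precisely this uniform bound on $\ell(\tau^{-1}V)$. The set of indecomposables $V$ with $\m(V)\leq\m(X)$ and $\ell(V)\leq\ell(X)$ need not be finite (already for tame algebras over an infinite field), so one cannot hope to bound $\ell(\bar V)$ by a case analysis over finitely many $V$; the bound must instead be extracted structurally from $\tau^{-1}=\Tr D$ and control of projective presentations of bounded-length modules.
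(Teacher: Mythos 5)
Your overall strategy is exactly the paper's: reduce to the preceding Proposition by producing a uniform linear bound $\ell(\bar V)\leq n_\La\,\ell(V)$ for a left almost split map $V\to\bar V$, and then set $n_X=n_\La\,\ell(X)$. The only difference is that the paper simply quotes this bound with the constant $n_\La=pq$ from \cite[Prop.~V.6.6]{ARS}, whereas you attempt to derive it from scratch via the injective/non-injective case split and a length estimate on $\tau^{-1}V$. Your derivation contains one genuine slip: $\tau^{-1}V=\Tr DV$ is \emph{not} computed from a minimal projective presentation $P_1\to P_0\to V\to 0$ of $V$ itself --- that presentation computes $\Tr V$ and hence $\tau V=D\Tr V$ as a submodule of $D\Hom_\La(P_1,\La)$. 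For $\tau^{-1}V$ you must instead use a minimal projective presentation of $DV$ over $\La^{\op}$, equivalently a minimal injective copresentation $0\to V\to I^0\to I^1$ of $V$; the same counting argument (socles in place of tops, the maximal length $q$ of an indecomposable injective in place of $c$) then gives $\ell(\tau^{-1}V)\leq C(\La)\,\ell(V)$, so the repair is routine and the conclusion stands. Your closing remark is also well taken: the set of relevant $V$ need not be finite, so a structural bound of this kind (rather than a finite case analysis) really is required, which is precisely why the paper leans on \cite[Prop.~V.6.6]{ARS}.
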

\begin{proof}
Use that there exists $n_\La\in\bbN$ having the following property:
for each indecomposable $V\in\mod\La$, there exists a left almost
split map $V\to\bar V$ satisfying $\ell(\bar V)\leq n_\La\ell(V)$.  In
fact, one takes $n_\La=pq$, where $p$ denotes the maximal length of an
indecomposable projective $\La$-module and $q$ denotes the maximal
length of an indecomposable injective $\La$-module; see
\cite[Prop.~V.6.6]{ARS}.
\end{proof}

\subsection{A finiteness criterion}\label{se:fin}

We present a criterion for a subcategory $\C$ of $\A$ such that the
number of indecomposable objects in $\C$ is finite. This is based on
the following classical lemma.

\begin{lem}[Harada-Sai]
Let $n\in\bbN$. A composition $X_1\to X_2\to \ldots\to X_{2^n}$ of
non-invertible maps between indecomposable objects of length at most
$n$ is zero.
\end{lem}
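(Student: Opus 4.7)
My plan is to argue by induction on the length bound $n$.

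The base case $n=1$ is essentially Schur's lemma: every object involved is simple, and any nonzero morphism between two simple objects is an isomorphism (a nonzero map from a simple has trivial kernel, so is mono; its image is a nonzero subobject of a simple, so equals the target, making it epi). Hence a non-invertible morphism $X_1 \to X_2$ between two simple objects must already be zero.

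For the inductive step, I would assume the statement for $n-1$ and consider a chain of $2^n - 1$ non-invertible morphisms $f_i\colon X_i \to X_{i+1}$ between indecomposables of length at most $n$. A useful preliminary observation is that each individual $f_i$ has image of length at most $n-1$: since $f_i$ is non-invertible it either fails to be mono (then $\ell(\Im f_i) < \ell(X_i) \leq n$) or fails to be epi (then $\ell(\Im f_i) < \ell(X_{i+1}) \leq n$). I would then split the chain at the midpoint, setting $g = f_{2^{n-1}} \circ \cdots \circ f_1$ (first $2^{n-1}$ maps, ending in $X_{2^{n-1}+1}$) and $h = f_{2^n - 1} \circ \cdots \circ f_{2^{n-1}+1}$ (last $2^{n-1} - 1$ maps). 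Since $X_{2^{n-1}+1}$ is indecomposable, either $\Im g \subsetneq X_{2^{n-1}+1}$ (so $\ell(\Im g) \leq n-1$) or $g$ is epic; the latter case is handled symmetrically by tracking $\Ker h$ instead of $\Im g$. In the former case, decompose $\Im g = \bigoplus_j U_j$ into indecomposable summands via Krull--Remak--Schmidt, with each $\ell(U_j) \leq n-1$; to prove $h \circ g = 0$ it suffices to show $h|_{U_j} = 0$ for every $j$. For each $U_j$, I would track the iterated images along the remaining $2^{n-1} - 1$ morphisms and, at every stage, pick a non-zero component into an indecomposable summand (again via Krull--Remak--Schmidt). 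This extracts a chain of $2^{n-1} - 1$ composable non-invertible morphisms between indecomposables of length $\leq n-1$, which vanishes by the inductive hypothesis.

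The principal obstacle is the combinatorial bookkeeping in the path-selection step: intermediate images are in general not indecomposable, so at each stage one must carefully select a summand through which a non-zero component of the next morphism factors, while also verifying that the selected component is non-invertible (using indecomposability of the ambient $X_i$'s to forbid accidental isomorphisms from producing a split subchain). The doubling of the chain length from $2^{n-1} - 1$ up to $2^n - 1$ is precisely what guarantees enough room to absorb any isomorphic components encountered en route and still deliver a sub-chain long enough for the inductive hypothesis to bite.
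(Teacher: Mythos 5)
The paper itself gives no proof of this lemma, deferring to \cite[Cor.~VI.1.3]{ARS}; the standard argument there fixes the length bound $n$ and proves, by induction on $k$, the \emph{stronger} statement that a composition of $2^k-1$ non-invertible maps between indecomposables of length at most $n$ has image of length at most $n-k$. The key step shows that if the images of both halves of a chain attain the maximal possible length, then the middle map splits off a nonzero direct summand of an indecomposable object and is therefore invertible. Your induction on $n$ is a genuinely different route, and while your base case and your observation that each single non-invertible map has image of length at most $n-1$ are correct, the inductive step has a gap that your proposed remedies do not close.

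The gap is exactly what you call the principal obstacle. First, indecomposability of the ambient objects $X_i$ does not forbid the extracted component maps from being isomorphisms: a non-invertible map $f_i\colon X_i\to X_{i+1}$ can restrict to a monomorphism on a proper indecomposable subobject $W\subset X_i$, and then $W\to f_i(W)$ is an isomorphism even though $f_i$ is not. Second, there is no room at all to ``absorb'' such isomorphisms: the tail $h$ consists of exactly $2^{n-1}-1$ maps, which is exactly the number the inductive hypothesis for $n-1$ demands, so even a single invertible component --- which, once composed into a neighbour, shortens the extracted chain by one --- leaves too few non-invertible maps. Worse, nothing in your setup bounds the number of invertible components; a priori \emph{every} extracted map along some path could be an isomorphism, in which case that path composite is nonzero and the conclusion $h|_{U_j}=0$ fails outright. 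Ruling this out is essentially the content of the lemma, and the only available mechanism is the length-decrementing bookkeeping of the standard proof, i.e.\ the strengthened inductive statement about $\ell(\Im)$. A secondary gap: your case split (``either $\Im g\subset X_{2^{n-1}+1}$ properly, or $g$ is epic and one tracks $\Ker h$ dually'') silently omits the case where $g$ is epic \emph{and} $h$ is monic, in which neither half hands you an object of length at most $n-1$ to start from; that case turns out to be vacuous, but showing so requires an argument you have not supplied.
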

\begin{proof}
See \cite[Cor.~VI.1.3]{ARS}.
\end{proof}

\begin{prop} 
Let $\A$ be a length category with left almost split maps and only
finitely many isomorphism classes of simple objects. Suppose that $\C$
is a subcategory such that
\begin{enumerate}
\item $\C$ is covariantly finite, and
\item there exists $n\in\bbN$ such that $\ell(X)\leq n$ for all
indecomposable $X\in\C$.
\end{enumerate}
Then there are only finitely many isomorphism classes of indecomposable
objects in $\C$.
\end{prop}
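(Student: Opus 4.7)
The plan is to show that every indecomposable in $\C$ lies in a finite subset of $\ind\C$ built recursively from the simple objects, using the Harada-Sai lemma to bound the recursion depth. To seed the construction, let $S_1,\ldots,S_r$ be the finitely many simple objects of $\A$ and fix a left $\C$-approximation $\a_i\colon S_i\to Y_i$ for each $i$. Since each $Y_i$ has finite length in $\A$ and $\C$ is closed under summands, the set $\F_0$ of indecomposable summands of $Y_1\oplus\cdots\oplus Y_r$ is a finite subset of $\ind\C$. For any $X\in\ind\C$, some simple $S_i$ embeds into $X$, and the inclusion factors through $\a_i$, so some $W\in\F_0$ admits a nonzero map $W\to X$; if $X\notin\F_0$, this map is necessarily non-invertible between indecomposables.

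To iterate, fix for each $W\in\ind\C$ a left almost split map $\b_W\colon W\to\bar W$ in $\A$ and a left $\C$-approximation $\g_W\colon\bar W\to Z_W$. Define $\F_{k+1}$ by adjoining to $\F_k$ the indecomposable summands of the various $Z_W$ with $W\in\F_k$; each $\F_k$ is finite. Suppose for contradiction that some $X\in\ind\C$ does not belong to $\F_{2^n}$. Starting from the seed, I would build inductively a chain $V_0\xrightarrow{\p_0}V_1\xrightarrow{\p_1}\cdots\xrightarrow{\p_{k-1}}V_k\xrightarrow{f_k}X$ with $V_i\in\F_i\cap\ind\C$, every $\p_i$ and $f_k$ nonzero and non-invertible, and whose total composition $V_0\to X$ stays nonzero. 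For the inductive step, since $f_k$ is not a split monomorphism, factor $f_k=h\circ\g_{V_k}\circ\b_{V_k}$ with $h\colon Z_{V_k}\to X$ (provided by the approximation property, because $X\in\C$). Decomposing $Z_{V_k}=\bigoplus_l T_l$ with $T_l\in\F_{k+1}$, and writing the components $\g_l\colon\bar V_k\to T_l$ and $\psi_l\colon T_l\to X$, one has $f_k=\sum_l\psi_l\circ(\g_l\circ\b_{V_k})$. The nonzero total composition forces some summand to preserve nonvanishing, and I set $V_{k+1}:=T_l$, $\p_k:=\g_l\circ\b_{V_k}$, and $f_{k+1}:=\psi_l$. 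Since $X\notin\F_{k+1}$ and $V_{k+1}=T_l\in\F_{k+1}$, we have $V_{k+1}\not\cong X$, so $f_{k+1}$ is non-invertible; and $\p_k$, being a factor of a map through the non-split mono $\b_{V_k}$, cannot itself be a split monomorphism (otherwise a retraction of $\p_k$ pre-composed with $\g_l$ would retract $\b_{V_k}$), hence is non-invertible between indecomposables.

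Iterating $2^n$ times, the chain $V_0\to\cdots\to V_{2^n}\to X$ consists of non-invertible maps between indecomposable objects of length at most $n$, so the Harada-Sai lemma forces its composition to vanish, contradicting the preserved nonvanishing. Therefore $\ind\C\subseteq\F_{2^n}$, which is finite. The main obstacle will be the bookkeeping in the inductive step: one must choose the summand $T_l$ so that the whole composition back to $V_0$ remains nonzero (not merely the local piece $\psi_l\circ\g_l\circ\b_{V_k}$), and one must exploit the factoring through the left almost split $\b_{V_k}$ precisely so that each new link $\p_k$ fails to be a split monomorphism, which is the essential mechanism keeping every link of the chain non-invertible and Harada-Sai applicable.
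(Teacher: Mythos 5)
Your proof is correct and follows essentially the same route as the paper: seed the construction with the finitely many simple objects, alternate left almost split maps with left $\C$-approximations, choose at each stage an indecomposable summand that keeps the total composite to $X$ nonzero, and invoke the Harada-Sai lemma to bound the number of steps by $2^n$. The only difference is organizational: you package the reachable indecomposables into explicit finite sets $\F_k$ and argue by contradiction, whereas the paper phrases the identical construction as reaching every indecomposable of $\C$ in at most $2^n$ steps with finitely many choices per step.
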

\begin{proof}
We claim that we can construct all indecomposable objects $X\in\C$ in
at most $2^n$ steps from the finitely many simple objects in $\A$ as
follows. Choose a non-zero map $S\to X$ from a simple object $S$ and
factor this map through the left $\C$-approximation $S\to S'$. Take an
indecomposable direct summand $X_0$ of $S'$ such that the component
$S\to X_0\to X$ of the composition $S\to S'\to X$ is non-zero. Stop if
$X_0\to X$ is an isomorphism. Otherwise take a left almost split map
$X_0\to Y_0$ and a left $\C$-approximation $Y_0\to Z_0$. The map
$X_0\to X$ factors through the composition $X_0\to Y_0\to Z_0$ and we
choose an indecomposable direct summand $X_1$ of $Z_0$ such that the
component $X_0\to Y_0\to X_1\to X$ is non-zero. Again, we stop if
$X_1\to X$ is an isomorphism. Otherwise, we continue as before and
obtain in step $r$ a sequence of non-invertible maps $$X_0\to X_1\to
X_2\to \ldots\to X_r$$ such that the composition is non-zero. The
Harada-Sai lemma implies that $r<2^n$ because $\ell(X_i)\leq n$ for
all $i$ by our assumption. Thus $X$ is isomorphic to $X_i$ for some
$i<2^n$, and we obtain $X$ in at most $2^n$ steps, having in each step
only finitely many choices by taking an indecomposable direct
summand. We conclude that $\C$ has only a finite number of
indecomposable objects.
\end{proof}

\begin{rem}
This classical argument provides a quick proof of the first
Brauer-Thrall conjecture; it is due to Auslander and Yamagata.
\end{rem}

\subsection{The initial segment}\label{se:init}
\begin{thm}[Ringel]
Let $\A$ be a length category such that $\ind\A$ is infinite. Suppose
also that $\A$ has only finitely many isomorphism classes of simple
objects and that every indecomposable object admits a left almost
split map. Then there exist infinitely many values
$\m(X_1)<\m(X_2)<\m(X_3)<\ldots$ of the Gabriel-Roiter measure for
$\A$ having the following properties.
\begin{enumerate}
\item If $\m(X)\neq\m(X_i)$ for all $i$, then $\m(X_i)<\m(X)$ for all $i$.
\item The set $\{X\in\ind\A\mid \m(X)=\m(X_i)\}$ is finite for all $i$.
\end{enumerate}
\end{thm}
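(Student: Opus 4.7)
The plan is to construct the sequence $\m(X_1)<\m(X_2)<\ldots$ inductively, so that each $\m(X_{i+1})$ is an immediate successor of $\m(X_i)$, by feeding the finiteness criterion of Section~\ref{se:fin} into the immediate-successor existence result of Section~\ref{se:suc}. Condition (1) will then be automatic from the immediate-successor property, and condition (2) will drop out of the finiteness criterion along the way.

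For the base case I take $X_1$ to be any simple object; by (GR6), $\m(X_1)$ is the minimum value of $\m$, and condition (2) at $i=1$ is just the hypothesis that there are finitely many isomorphism classes of simples. For the induction step, assume $X_1,\dots,X_i$ have been constructed. Let $M_i$ be the set of values $p$ with $p\leq\m(X_i)$, viewed as a subset of the totally ordered set of Gabriel-Roiter values. By the immediate-successor property at each previous step, $M_i=\{\m(X_1),\dots,\m(X_i)\}$ is finite and closed under predecessors. Hence, by Ringel's Proposition in Section~\ref{se:cov}, the subcategory $\A(M_i)$ is covariantly finite. By (GR2), each $p\in M_i$ determines the length of those $X\in\ind\A$ with $\m(X)=p$, so the lengths of indecomposables in $\A(M_i)$ are bounded. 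Applying the Proposition in Section~\ref{se:fin}---whose hypotheses on $\A$ are exactly those of the theorem---I conclude that $\A(M_i)$ has only finitely many indecomposables. In particular, property (2) holds at the step $i$.

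Since $\ind\A$ is infinite, some $Y\in\ind\A$ lies outside $\A(M_i)$, so $\m(X_i)<\m(Y)$ and $\m(X_i)$ is not maximal. To apply the Proposition in Section~\ref{se:suc} and produce an immediate successor, I need a uniform bound $n_{X_i}$ on $\ell(\bar V)$ for every indecomposable $V$ with $\m(V)\leq\m(X_i)$ and $\ell(V)\leq\ell(X_i)$, where $V\to\bar V$ is left almost split (which exists by hypothesis). All such $V$ lie in the finite set of indecomposables of $\A(M_i)$, so I take $n_{X_i}$ to be the maximum of $\ell(\bar V)$ over this finite set. This furnishes an immediate successor $\m(X_{i+1})$ of $\m(X_i)$, completing the induction.

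Condition (1) is then automatic: if $\m(X)$ avoids the sequence, then $\m(X_1)<\m(X)$ by (GR6), and if $\m(X_i)<\m(X)$ then $\m(X_{i+1})\leq\m(X)$ by the immediate successor property, hence $\m(X_{i+1})<\m(X)$ since $\m(X)\neq\m(X_{i+1})$. The main delicate point in this argument is the bookkeeping that keeps $M_i$ finite under the induction: this relies on the fact that the values of $\m$ are totally ordered by (GR4), so ``immediate successor'' really means nothing sits strictly between $\m(X_i)$ and $\m(X_{i+1})$, which is what lets $M_i$ equal the explicit finite set $\{\m(X_1),\dots,\m(X_i)\}$ and thereby feeds the hypotheses of the finiteness criterion back into the next step.
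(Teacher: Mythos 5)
Your proof is correct and follows essentially the same route as the paper: induction starting from a simple object, Proposition~\ref{se:suc} for immediate successors, and covariant finiteness of $\A(M)$ combined with (GR2) and Proposition~\ref{se:fin} for the finiteness of each level. The one place you go beyond the paper's write-up is in explicitly verifying the hypothesis of Proposition~\ref{se:suc} (the bound $n_{X_i}$, obtained by taking the maximum of $\ell(\bar V)$ over the finitely many indecomposables of $\A(M_i)$) and in noting why $\m(X_i)$ is not maximal; the paper leaves both points implicit, so this is a welcome clarification rather than a deviation.
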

\begin{proof}
We construct the values $\m(X_i)$ by induction as follows.  Take for
$X_1$ any simple object. Observe that $\m(X_1)$ is minimal among all
$\m(X)$ by (GR6) and that only finitely many $X\in\ind\A$ satisfy
$\m(X)=\m(X_1)$ because $\A$ has only finitely many simple objects.
Now suppose that $\m(X_1)<\ldots<\m(X_n)$ have been constructed,
satisfying the conditions (1) and (2) for all $1\leq i\leq n$.  We can
apply Proposition~\ref{se:suc} and find an immediate successor
$\m(X_{n+1})$ of $\m(X_n)$. It remains to show that the set
$\{X\in\ind\A\mid \m(X)=\m(X_{n+1})\}$ is finite.  To this end
consider $M=\{\m(X_1),\ldots,\m(X_{n+1})\}$. We know from
Proposition~\ref{se:cov} that $\A(M)$ is a covariantly finite
subcategory. Clearly, $\ell(X)$ is bounded by
$\max\{\ell(X_i),\ldots,\ell(X_{n+1})\}$ for all indecomposable
$X\in\A(M)$ by (GR2). We conclude from Proposition~\ref{se:fin} that
the number of indecomposables in $\A(M)$ is finite. Thus
$\{X\in\ind\A\mid \m(X)=\m(X_{n+1})\}$ is finite and the proof is
complete.
\end{proof}

\begin{cor}[Brauer-Thrall I]
Let $\A$ be a length category satisfying the above conditions. Then for
every $n\in\bbN$ there exists an indecomposable object $X\in\A$ with
$\ell (X)> n$.
\end{cor}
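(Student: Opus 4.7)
The plan is to deduce Brauer-Thrall~I as an almost immediate corollary of the preceding theorem together with property (GR5) of the Gabriel-Roiter measure. The theorem furnishes an infinite strictly ascending chain $\m(X_1)<\m(X_2)<\m(X_3)<\ldots$ of pairwise distinct values of $\m$ on $\ind\A$, while (GR5) asserts that only finitely many such values can occur among indecomposables whose length is bounded by a given $n$. These two facts are incompatible unless the lengths $\ell(X_i)$ are themselves unbounded, and this contradiction is exactly what produces arbitrarily long indecomposables.

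Concretely, given $n\in\bbN$, I would consider the finite set $V_n=\{\m(X)\mid X\in\ind\A\text{ and }\ell(X)\leq n\}$ supplied by (GR5). Since the values $\m(X_i)$ are pairwise distinct, all but finitely many of them must lie outside $V_n$. Picking any index $i$ with $\m(X_i)\notin V_n$, the defining condition of $V_n$ and the membership $X_i\in\ind\A$ force $\ell(X_i)>n$, which is the indecomposable object required by the corollary.

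There is essentially no obstacle at this stage: the hard work has already been carried out in the proof of the theorem, which combined the existence of immediate successors of $\m(X_n)$ (Proposition~\ref{se:suc}) with the covariant finiteness of $\A(M)$ and the Harada-Sai lemma via the finiteness criterion of Proposition~\ref{se:fin}. In the corollary, property (GR2) plays only the passive role of translating a statement about distinct measures into one about distinct (hence unbounded) lengths, and the argument reduces to a single counting step against (GR5).
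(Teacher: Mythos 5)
Your argument is correct and is exactly the paper's proof: the theorem supplies infinitely many pairwise distinct values $\m(X_i)$, while (GR5) makes the set of values attained by indecomposables of length at most $n$ finite, so some $X_i$ must have $\ell(X_i)>n$. Nothing further is needed.
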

\begin{proof}
Use that for fixed $n\in\bbN$, there are only finitely many values
$\m(X)$ with $\ell(X)\leq n$, by (GR5).
\end{proof}

\subsection{The terminal segment}\label{se:term}
\begin{thm}[Ringel]
Let $\A$ be a length category such that $\ind\A$ is infinite. Suppose
also that $\A$ has a cogenerator (i.e.\ an object $Q$ such that each
object in $\A$ admits a monomorphism into a direct sum of copies of
$Q$) and that every indecomposable object admits a right almost split
map. Then there exist infinitely many values
$\m(X^1)>\m(X^2)>\m(X^3)>\ldots$ of the Gabriel-Roiter measure for $\A$
having the following properties.
\begin{enumerate}
\item If $\m(X)\neq\m(X^i)$ for all $i$, then $\m(X^i)>\m(X)$ for all $i$.
\item The set $\{X\in\ind\A\mid \m(X)=\m(X^i)\}$ is finite for all $i$.
\end{enumerate}
\end{thm}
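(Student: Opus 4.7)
The plan is to dualize the proof of the initial segment theorem (Theorem~\ref{se:init}): the indecomposable summands of the cogenerator play the role of simple objects, and right almost split maps replace left almost split maps.

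For the base case I would decompose the cogenerator as $Q = Q_1 \oplus \ldots \oplus Q_s$ into indecomposables (a finite decomposition by Krull--Remak--Schmidt, since $Q$ has finite length). Example~(2) following the Corollary in (\ref{se:main}) gives $\m(X) \leq \max_j \m(Q_j)$ for every $X \in \ind\A$, so I take $X^1 = Q_{j_0}$ attaining the maximum. If $Y \in \ind\A$ satisfies $\m(Y) = \m(X^1)$, then $Y$ embeds in a power $Q^r$, so by (GR8) is a direct summand of $Q^r$, hence isomorphic to some $Q_j$ by Krull--Schmidt. Thus the equivalence class at $X^1$ is contained in $\{Q_1, \ldots, Q_s\}$ and is finite.

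For the inductive step the key auxiliary result is the dual of the Lemma in (\ref{se:suc}): \emph{if $Y \subset X$ is a Gabriel-Roiter inclusion in $\ind\A$ and $\bar X \to X$ is right almost split, then $Y$ embeds into $\bar X$.} The proof mirrors the original: the inclusion $Y \hookrightarrow X$ is not a split epimorphism, so factors as $Y \to \bar X \to X$, and the first arrow is mono since the composite is. From this I derive the existence of an immediate predecessor of $\m(X^n)$ by applying (GR7) to pairs $(Y, X^n)$ with $\m(Y) < \m(X^n)$: the subobjects $Y'' \subseteq X^n$ produced have $\ell(Y'') \leq \ell(X^n)$ automatically, and when $\m(Y'') = \m(X^n)$ the dual lemma bounds the length of the Gabriel-Roiter predecessor $Y'$ of $X^n$ by $\ell(\bar X^n)$. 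Together with (GR5), this forces the candidate values for the immediate predecessor to lie in a finite set. For the finiteness of the equivalence class at $X^{n+1}$ I would dualize Proposition~\ref{se:fin}: a contravariantly finite subcategory $\C \subseteq \A$ whose indecomposables have bounded length contains only finitely many indecomposables up to isomorphism. The proof adapts the Harada--Sai argument, starting from a nonzero map $X \to Q_j$ (guaranteed by the cogenerator) and iterating right almost split maps and right $\C$-approximations. This is applied to the subcategory $\C = \A(\{\m(X^1), \ldots, \m(X^{n+1})\})$, whose indecomposables have bounded length by (GR2).

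The hard part will be the dual of Proposition~\ref{se:cov}, needed to guarantee contravariant finiteness of $\C$: the naive statement --- that $\A(M)$ is contravariantly finite when $M$ is closed under successors --- fails, because $\A(M)$ need not be closed under quotients. A different characterization, perhaps exploiting the cogenerator to construct right $\C$-approximations as suitable subobjects of $Q^r$, seems to be needed. Once this is in place, the infiniteness of the chain follows immediately: if it terminated at $\m(X^N)$, property~(1) would force $\ind\A$ to equal the finite union $\bigcup_{i \leq N} \{X \in \ind\A : \m(X) = \m(X^i)\}$, each factor finite by (2), contradicting the hypothesis that $\ind\A$ is infinite.
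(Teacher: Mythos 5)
Your base case is exactly right and coincides with the paper's induction start ($n=0$: the maximal value of $\m$ is attained on an indecomposable summand of $Q$, and the direct-summand clause of (GR8) confines that equivalence class to the finitely many summands of $Q$). The inductive step, however, has a genuine gap, and it is the one you flag yourself: the wholesale dualization of (\ref{se:init}) does not go through, because there is no dual of Proposition~\ref{se:cov} --- the subcategory $\A(M)$ for $M$ closed under successors is not closed under quotients and need not be contravariantly finite --- and you do not supply a substitute. There is a second, unflagged problem in your route to the immediate predecessor: applying (GR7) to a pair $(Y,X^n)$ with $\m(Y)<\m(X^n)$ produces $Y'\subset Y''\subseteq X^n$ with $\m(Y')\leq\m(Y)<\m(Y'')$, i.e.\ it bounds $\m(Y)$ from \emph{below} by $\m(Y')$ and from above only by values $\m(Y'')$ that may already equal $\m(X^n)$; it does not exhibit a finite set of values dominating every $\m(Y)<\m(X^n)$, so the dual of Proposition~\ref{se:suc} is not automatic either.

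The paper sidesteps both difficulties with a different device, the Auslander--Smal{\o} lemma: for any finite length object $P$ whose indecomposable summands admit right almost split maps, the subcategory $\A_P$ of objects with no summand isomorphic to a summand of $P$ is contravariantly finite (proved by iterating right almost split maps and truncating via Harada--Sai). In the induction one takes $P$ to be the direct sum of the finitely many $X\in\ind\A$ with $\m(X)\geq\m(X^n)$, chooses a right $\A_P$-approximation $P'\to Q$ of the cogenerator, and lets $X^{n+1}$ be an indecomposable summand of $P'$ of maximal measure. Every indecomposable $X$ with $\m(X)<\m(X^n)$ lies in $\A_P$, hence embeds into a power of $P'$, so (GR8) gives $\m(X)\leq\m(X^{n+1})$ and forces $X$ to be a summand of $P'$ when equality holds. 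This one step delivers the immediate predecessor, property (1), and the finiteness (2) simultaneously; no analogue of Proposition~\ref{se:cov} or of Proposition~\ref{se:fin} is needed. If you want to salvage your outline, the missing ingredient is precisely this lemma: the contravariantly finite subcategory to work with is $\A_P$ (the complement of finitely many indecomposables), not $\A(M)$.
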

The proof is based on the following lemma.

\begin{lem}[Auslander-Smal{\o}]
Let $\A$ be a length category and let $X\in\A$. Denote by $\A_X$ the
subcategory formed by all objects in $\A$ having no indecomposable
direct summand which is isomorphic to a direct summand of $X$.  If
every indecomposable direct summand of $X$ admits a right almost split
map, then $\A_X$ is contravariantly finite.
\end{lem}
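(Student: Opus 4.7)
The plan is to construct, for each $Y\in\A$, a right $\A_X$-approximation by iterated replacement of indecomposable summands lying in $\add X$ using the given right almost split maps, and to terminate the iteration via the Harada-Sai lemma. The strategy rests on two simple observations: $\A_X$ is closed under direct sums and direct summands, and if $Z\in\A_X$ then every map $Z\to X_i$ into an indecomposable summand $X_i$ of $X$ fails to be a split epimorphism, hence factors through any right almost split map $f_i\colon E_i\to X_i$.

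Write $X=X_1\oplus\cdots\oplus X_m$ with right almost split maps $f_i\colon E_i\to X_i$, and set $L=\max_i\ell(X_i)$. Fix $Y\in\A$ and put $Y^{(0)}=Y$. Inductively, using Krull-Remak-Schmidt, decompose $Y^{(k)}=A^{(k)}\oplus B^{(k)}$ with $A^{(k)}=\bigoplus_i X_i^{m_i^{(k)}}\in\add X$ and $B^{(k)}\in\A_X$, and define
$$Y^{(k+1)}:=\bigoplus_i E_i^{m_i^{(k)}}\oplus B^{(k)}\xrightarrow{g^{(k+1)}} Y^{(k)},\qquad g^{(k+1)}=\bigoplus_i f_i^{m_i^{(k)}}\oplus\id_{B^{(k)}}.$$
Let $h^{(k)}=g^{(1)}\circ\cdots\circ g^{(k)}\colon Y^{(k)}\to Y$. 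A straightforward induction on $k$ shows that every map $\alpha\colon Z\to Y$ with $Z\in\A_X$ lifts through $h^{(k)}$: the $A^{(k-1)}$-component of a given lift to $Y^{(k-1)}$ decomposes as maps $Z\to X_i$, none of which are split epimorphisms, and each therefore factors through $f_i$; the $B^{(k-1)}$-component transfers verbatim to $Y^{(k)}$.

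For termination, note that $A^{(k+1)}$ sits inside $\bigoplus_i E_i^{m_i^{(k)}}$ and maps to $A^{(k)}$ via the restriction of $\bigoplus_i f_i^{m_i^{(k)}}$. On each indecomposable summand this is a component of some $f_i$ between indecomposables in $\add X$, which is non-invertible (otherwise $f_i$ itself would split). Composing $N$ such steps, the induced map from each indecomposable summand of $A^{(N)}$ to $A^{(0)}$ is a chain of $N$ non-invertible maps between indecomposables of length at most $L$, and this vanishes once $N\geq 2^L$ by the Harada-Sai lemma. Since $h^{(N)}|_{A^{(N)}}$ factors through $A^{(0)}\hookrightarrow Y$, it is zero, so $h^{(N)}$ factors as $Y^{(N)}\twoheadrightarrow B^{(N)}\xrightarrow{h}Y$ with $B^{(N)}\in\A_X$; combined with the lifting property, $h$ is the desired right $\A_X$-approximation of $Y$. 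The main obstacle is recognizing that the correct termination mechanism is Harada-Sai applied inside $\add X$; once that is spotted, the rest is bookkeeping.
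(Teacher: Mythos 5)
Your proof is correct and follows essentially the same route as the paper: iteratively replace the $\add X$-summands via right almost split maps while the $\A_X$-parts accumulate into the approximation, with the lifting property coming from the fact that maps out of $\A_X$ into summands of $X$ are never split epimorphisms, and termination from Harada--Sai applied to the non-invertible components between indecomposables of bounded length. The only difference is bookkeeping: the paper first reduces to approximating indecomposable objects and tracks the branching with multi-indices $X_{i_0i_1\ldots i_j}$, whereas you carry an arbitrary $Y$ through the iteration with the splitting $Y^{(k)}=A^{(k)}\oplus B^{(k)}$.
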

\begin{proof}
Let $X=\oplus^r_{i_0=1}X_{i_0}$ be a decomposition into
indecomposables. It is sufficient to construct a right
$\A_X$-approximation for each indecomposable object $Z\in\A$. We take
the identity map if $Z\in\A_X$. Otherwise, $Z$ is isomorphic to
$X_{i_0}$ for some $i_0$ and we proceed as follows. Let
$\p_{i_0}\colon\bar X_{i_0}\to X_{i_0}$ be a right almost split map
and choose a decomposition $$\bar X_{i_0}=Y_{i_0}\oplus
(\oplus_{i_1}X_{i_0i_1})$$ such that $Y_{i_0}\in\A_X$ and
$i_0i_1\in\{1,\ldots,r\}$ for all $i_1$.  Note that each map $V\to
X_{i_0}$ with $V\in\A_X$ factors through $\p_{i_0}$.  Also, each
component $X_{i_0i_1}\to X_{i_0}$ of $\p_{i_0}$ is non-invertible. Now
compose $\p_{i_0}$ with $\id_{Y_{i_0}}\oplus(\oplus_{i_1}\p_{i_0i_1})$
to obtain a map
$$Y_{i_0}\oplus(\oplus_{i_1}(Y_{i_0i_1}\oplus(\oplus_{i_2}X_{i_0i_1i_2})))\to
Y_{i_0}\oplus(\oplus_{i_1}X_{i_0i_1})\to X_{i_0}.$$ Again, each map
$V\to X_{i_0}$ with $V\in\A_X$ factors through this new map, and each
component $X_{i_0i_1i_2}\to X_{i_0i_1}$ is non-invertible.  We
continue this procedure, compose this map with
$$\id_{Y_{i_0}}\oplus(\oplus_{i_1}(\id_{Y_{i_0i_1}}
\oplus(\oplus_{i_2}\p_{i_0i_1i_2}))),$$ and so on.
Now let $n=2^m$ where $m=\max\{\ell(X_1),\ldots,\ell(X_r)\}$.
Then the Harada-Sai lemma implies that any composition
$$X_{i_0i_1\ldots i_n}\to X_{i_0i_1\ldots i_{n-1}}\to \ldots\to
X_{i_0i_1}\to X_{i_0}$$ is zero. Thus the induced map
$$\oplus_{j=0}^n(\oplus_{i_1,i_2,\ldots,i_j}Y_{i_0i_1\ldots i_j})\lto X_{i_0}$$
is a right $\A_X$-approximation of $X_{i_0}$.
\end{proof}

\begin{proof}[Proof of the theorem]
We construct the values $\m(X^i)$ by induction as follows. Let $n\geq
0$ and suppose that $\m(X^1)>\ldots>\m(X^n)$ have been constructed,
satisfying the conditions (1) and (2) for all $1\leq i\leq n$.  Denote
by $P$ the direct sum of all $X\in\ind\A$ with $\m(X)\geq\m(X^n)$, and
let $P=0$ if $n=0$. Choose a right $\A_P$-approximation $P'\to Q$ and
take for $X^{n+1}$ any indecomposable direct summand $X$ of $P'$ such
that $\m(X)$ is maximal. Observe that every indecomposable object
$X\in\A_P$ is cogenerated by $Q$ and therefore by $P'$. Thus (GR8)
implies that $\m(X)$ is bounded by $\m(X^{n+1})$. Moreover, if
$\m(X)=\m(X^{n+1})$, then $X$ is isomorphic to a direct summand of
$P'$. Thus $\{X\in\ind\A\mid \m(X)=\m(X^{n+1})\}$ is finite and the
proof is complete.
\end{proof}

Let $\La$ be an artin algebra of infinite representation type. Then
$\A=\mod\La$ satisfies the assumptions of Theorems~\ref{se:init} and
\ref{se:term}.  Let us summarize the structure of the partial order on
the values of the Gabriel-Roiter measure as follows. We have
$$\ind\A/\m:=\{\m(X)\mid X\in\ind\A\}=S_\init\sqcup S_\cent\sqcup
S_\term\cong\bbN\sqcup S_\cent \sqcup\bbN^\op,$$ where the notation
$S=S_1\sqcup S_2$ for a poset $S$ means $S=S_1\cup S_2$ and $x_1<x_2$
for all $x_1\in S_1$, $x_2\in S_2$.

\subsection{The Kronecker algebra}
Let $\La=\smatrix{k&k^2\\ 0&k}$ be the Kronecker algebra over an
algebraically closed field $k$.  We consider the abelian length
category which is formed by all finite dimensional $\La$-modules. A
complete list of indecomposable objects is given by the preprojectives
$P_n$, the regulars $R_n(\a,\b)$, and the preinjectives $Q_n$; see
\cite[Thm.~VIII.7.5]{ARS}. More precisely,
$$\ind\La=\{P_n\mid n\in\bbN\}\cup\{R_n(\a,\b)\mid
n\in\bbN,\,(\a,\b)\in\mathbb P_k^1\}\cup\{Q_n\mid n\in\bbN\},$$ and we
obtain the following Hasse diagram.
$$\xymatrix@=0.6em{
&&\ar@{-}[d]&\ar@{-}[dd]&&\ar@{-}[dd]\\
7&&\bullet\ar@{-}[dd]&&&&\bullet\ar@{-}[dl]\ar@{-}[dlll]\\
6&&&\bullet\ar@{-}[dd]\ar@{-}[dl]&\cdots&\bullet\ar@{-}[dd]\ar@{-}[dlll]\\
5&&\bullet\ar@{-}[dd]&&&&\bullet\ar@{-}[dl]\ar@{-}[dlll]\\
4&&&\bullet\ar@{-}[dd]\ar@{-}[dl]&\cdots&\bullet\ar@{-}[dd]\ar@{-}[dlll]\\
3&&\bullet\ar@{-}[dd]&&&&\bullet\ar@{-}[dl]\ar@{-}[dlll]\\
2&&&\bullet\ar@{-}[dl]&\cdots&\bullet\ar@{-}[dlll]\\
1&&\bullet&&&&\bullet\\ \ell&&P_n&&R_n(\a,\b)&&Q_n }$$ The set of
indecomposables is ordered via the Gabriel-Roiter measure as follows:
\begin{multline*}
\m(Q_1)=\m(P_1)<\m(P_2)<\m(P_3)<\ldots \;\;\;<\m(R_1)<\m(R_2)<\m(R_3)<\ldots\\
\ldots<\m(Q_4)<\m(Q_3)<\m(Q_2)
\end{multline*}

\section{The Gabriel-Roiter measure for derived categories}

Let $\A$ be an abelian length category. We propose a definition of the
Gabriel-Roiter measure for the bounded derived category
$\bfD^b(\A)$. The derived Gabriel-Roiter measure extends the
Gabriel-Roiter measure for the underlying abelian category $\A$.

\subsection{The definition}
The bounded derived category $\bfD^b(\A)$ of $\A$ is by definition the
full subcategory of the derived category $\bfD(\A)$ which is formed by
all complexes $X$ such that $H^nX=0$ for almost all $n$. Note that each
object of $\bfD^b(\A)$ admits a finite direct sum decomposition into
indecomposable objects having local endomorphism rings. Moreover, such
a decomposition is unique up to an isomorphism.  We denote by
$\ind\bfD^b(\A)$ the set of isomorphism classes of indecomposable
objects of $\bfD^b(\A)$.

We consider the functor
$$\bfD^b(\A)\lto\A,\quad X\mapsto H^*X=\oplus_{n\in{\bbZ}}H^nX,$$
and the isomorphism classes of objects of $\bfD^b(\A)$ are partially
ordered via
$$X\leq Y\quad :\Longleftrightarrow\quad \begin{cases}\text{there
exists a map } X\to Y \text{ inducing}\\ \text{a monomorphism }
H^*X\to H^*Y.\end{cases}$$ We have the length function
$$\ell_{H^*}\colon\ind\bfD^b(\A)\lto\bbN,\quad X\mapsto\ell(H^*X)$$
and the induced chain length function
$\ell_{H^*}^*\colon\ind\bfD^b(\A)\to\Ch(\bbN)$ is by definition the {\em
Gabriel-Roiter measure} for $\bfD^b(\A)$.

\subsection{Derived versus abelian Gabriel-Roiter measure}
\begin{prop}
The Gabriel-Roiter measure for $\bfD^b(\A)$ extends the Gabriel-Roiter
measure for $\A$. More precisely, the canonical functor
$\A\to\bfD^b(\A)$ sending an object of $\A$ to the corresponding
complex concentrated in degree zero induces an inclusion
$\ind\A\to\ind\bfD^b(\A)$ of partially ordered sets, which makes the
following diagram commutative.
$$\xymatrix{\ind\A\ar[rd]_-{\ell^*}\ar[rr]^-\inc&&\ind\bfD^b(\A)
\ar[ld]^-{\ell^*_{H^*}}\\&\Ch(\bbN) }$$
\end{prop}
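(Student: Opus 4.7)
The plan is to verify, in order, that the canonical functor $\iota\colon\A\to\bfD^b(\A)$ sending $X$ to the complex concentrated in degree zero (i) is fully faithful and both preserves and reflects indecomposability, so that it descends to an injection $\ind\A\to\ind\bfD^b(\A)$; (ii) is compatible with both the partial order and the length function involved in the two Gabriel-Roiter measures; and (iii) induces, for each $X\in\ind\A$, a bijection $\Ch(\ind\A,X)\to\Ch(\ind\bfD^b(\A),\iota(X))$, from which commutativity of the triangle is immediate by the very definition of $\ell^*$ and $\ell^*_{H^*}$ as chain length functions.

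Step (i) is standard: $\iota$ is fully faithful, $H^n\iota(X)=X$ for $n=0$ and vanishes otherwise, and since endomorphism rings are preserved, $X$ is indecomposable in $\A$ if and only if $\iota(X)$ is indecomposable in $\bfD^b(\A)$. For step (ii), any morphism $f\colon\iota(X)\to\iota(Y)$ in $\bfD^b(\A)$ is induced by a unique morphism $X\to Y$ in $\A$, which coincides with $H^0f=H^*f$ as a map in $\A$; thus $H^*f$ is a monomorphism exactly when the original map $X\to Y$ is, so $\iota(X)\leq\iota(Y)$ in $\bfD^b(\A)$ holds if and only if $X\subseteq Y$ in $\A$. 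The equality $\ell_{H^*}(\iota(X))=\ell(H^*\iota(X))=\ell(X)$ is immediate.

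The decisive point is step (iii). Surjectivity amounts to showing that any chain $Z_1<Z_2<\dots<Z_n=\iota(X)$ in $\ind\bfD^b(\A)$ factors through $\iota$. For each $i$, pick a morphism $g_i\colon Z_i\to\iota(X)$ in $\bfD^b(\A)$ with $H^*g_i$ a monomorphism. Since $H^*g_i$ is graded and $H^n\iota(X)=0$ for $n\neq 0$, the restriction of $H^*g_i$ to $H^nZ_i$ is zero for every $n\neq 0$; monomorphicity then forces $H^nZ_i=0$ for all $n\neq 0$. Hence $Z_i$ has cohomology concentrated in degree zero, so $Z_i\cong\iota(X_i)$ for $X_i=H^0Z_i\in\A$, with $X_i$ indecomposable because $Z_i$ is. Applying (ii) to consecutive entries produces a chain $X_1\subset\dots\subset X_n=X$ in $\ind\A$ whose image is the given chain, and injectivity of the map on chains is clear from full faithfulness. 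This cohomology-concentration argument is the only non-routine step; the rest of the proposition unfolds from the definitions.

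Finally, because the bijection of step (iii) respects the associated sequence of lengths by virtue of $\ell_{H^*}(\iota(X_i))=\ell(X_i)$ from step (ii), the maxima defining $\ell^*_{H^*}(\iota(X))$ and $\ell^*(X)$ coincide, giving the desired commutativity $\ell^*_{H^*}\comp\iota=\ell^*$ and completing the proof.
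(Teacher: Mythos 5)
Your proof is correct and follows essentially the same route as the paper, which simply cites the commutativity of the corresponding diagram for the length functions $\ell$ and $\ell_{H^*}$ together with the fact that $\ind\A$ is closed under predecessors in $\ind\bfD^b(\A)$. Your step (iii) --- showing that any $Z\leq\iota(X)$ must have cohomology concentrated in degree zero because a graded monomorphism into $H^*\iota(X)$ kills $H^nZ$ for $n\neq 0$ --- is precisely the verification of that predecessor-closure which the paper leaves to the reader.
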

\begin{proof}
Use the fact that the diagram
$$\xymatrix{\ind\A\ar[rd]_-{\ell}\ar[rr]^-\inc&&\ind\bfD^b(\A)
\ar[ld]^-{\ell_{H^*}}\\&\bbN }$$
is commutative and that $\ind\A$ is closed under predeccessors in
$\ind\bfD^b(\A)$.
\end{proof}

\subsection{An alternative definition}
For an alternative definition of the Gabriel-Roiter measure for
$\bfD^b(\A)$, consider the lexicographic order on
$$\coprod_\bbZ\bbN_0:=\{(x_n)\in\prod_\bbZ\bbN_0\mid x_n=0\text{ for
almost all }n\}, \text{ with}$$
$$(x_n)\leq (y_n)\quad :\Longleftrightarrow\quad \begin{cases}x_i=y_i\text{
for all }i\in\bbZ, \text{ or}\\
x_i\leq y_i\text{ for }i=\min\{n\in\bbZ\mid x_n\neq y_n\}.\end{cases}$$
Take instead of $\ell_{H^*}$ the length function
$$\la\colon\ind\bfD^b(\A)\lto\coprod_\bbZ\bbN_0,\quad X\mapsto(\ell(H^nX)),$$
and instead of $\ell^*_{H^*}$ the induced chain length function
$$\la^*\colon\ind\bfD^b(\A)\lto\Ch(\coprod_\bbZ\bbN_0).$$ We
illustrate the difference between both definitions by taking a
hereditary length category $\A$. Recall that $\A$ is \emph{hereditary}
if $\Ext^2_\A(-,-)=0$.  Then each indecomposable object of
$\bfD^b(\A)$ is isomorphic to a complex concentrated in a single
degree. Identifying objects having the same Gabriel-Roiter measure, we obtain
$$\ind\bfD^b(\A)/\ell_{H^*}^*=\ind\A/\ell^*,$$ whereas
$$\ind\bfD^b(\A)/\la^*=
\quad\ldots\sqcup\ind\A/\ell^*\sqcup\ind\A/\ell^*
\sqcup\ind\A/\ell^*\sqcup\ldots\,.$$

\end{document}